\newtheorem{theorem}{Theorem}[section]
\newtheorem{lemma}[theorem]{Lemma}
\newtheorem{proposition}[theorem]{Proposition}
\newtheorem{corollary}[theorem]{Corollary}
\theoremstyle{definition}
\newtheorem{definition}[theorem]{Definition}
\newtheorem{example}[theorem]{Example}
\theoremstyle{remark}
\newtheorem{remark}[theorem]{Remark}
\numberwithin{equation}{section}
\begin{document}

\setcounter{page}{1}

\title[Dynamical Properties and Some Classes of Non-porous Subsets]{Dynamical Properties and Some  Classes of Non-porous Subsets of Lebesgue Spaces}

\author[S. Ivkovi\'c]{Stefan Ivkovi\'c}

\address{Mathematical Institute of the Serbian Academy of Sciences and Arts,
	p.p. 367, Kneza Mihaila 36, 11000 Beograd, Serbia.}
\email{\textcolor[rgb]{0.00,0.00,0.84}{stefan.iv10@outlook.com}}

\author[S. \"Oztop]{Serap \"Oztop}
\address{Department of Mathematics, Faculty of Science, Istanbul University, Istanbul, Turkey}
\email{\textcolor[rgb]{0.00,0.00,0.84}{oztops@istanbul.edu.tr}}

\author[S.M. Tabatabaie]{Seyyed Mohammad Tabatabaie$^*$}

\address{Department of Mathematics, University of Qom, Qom, Iran.}
\email{\textcolor[rgb]{0.00,0.00,0.84}{sm.tabatabaie@qom.ac.ir}}

\address{
\newline
}

\subjclass[2010]{47A16, 28A05, 43A15, 43A62.
	\\
\indent $^{*}$Corresponding author}

\keywords{non-$\sigma$-porous sets, Lebesgue spaces, $\sigma$-porous operators, locally compact groups, locally compact hypergroups, hypercyclic vectors}
\begin{abstract}
In this paper, we introduce several classes of non-$\sigma$-porous subsets of a general Lebesgue space. Also,  we study some linear dynamics of operators and show that the set of all non-hypercyclic vectors of a sequences of weighted translation operators on $L^p$-spaces is not $\sigma$-porous.
\end{abstract} 
\maketitle
\section{Introduction}
$\sigma$-porous sets, as a collection of very thin subsets of  metric spaces, were introduced and studied first time in \cite{dol} through a research on boundary behavior of functions, and then were applied in differentiation and Banach spaces theories in \cite{bel,pre}. The concepts  related to porosity have been active topics in recent decades because they can be adapted for many known notions in several kind of metric spaces; see the monograph \cite{zaj2}.  $\sigma$-porous subsets of $\mathbb{R}$ are null and of first category, while in every complete metric space without any isolated points these two categories are different \cite{zaj}. 
On the other hand, linear dynamics including hypercyclicity in operator theory received attention during the last years; see books \cite{bmbook,gpbook} and for instance \cite{cot,tabsaw,tabiv2}.
Recently, F. Bayart in \cite{bay} through study of hypercyclic shifts (which was previously studied in \cite{sa95}; see also \cite{ge00}) proved that the set of non-hypercyclic vectors of some classes of weighted shift operators on $\ell^2(\mathbb{Z})$ is a non-$\sigma$-porous set. This would be a new example of a first category set which is not $\sigma$-porous. In this work, by some idea from the proof of \cite[Theorem1]{bay} first we introduce a class of non-$\sigma$-porous subsets of general Lebesgue spaces, and then we develop the main result of \cite{bay} to sequences of weighted translation operators on general Lebesgue spaces in the context of discrete groups and hypergroups. In particular, we prove that if $p\geq 1$, $K$ is a discrete hypergroup, $(a_n)$ is a sequence with distinct terms in $K$, and   $w:K\rightarrow(0,\infty)$ is a bounded measurable function such that
	$$\sum_{n\in\mathbb{N}}\frac{1}{w(a_0)w(a_1)\ldots w(a_n)}\chi_{\{a_{n+1}\}}\in L^p(K),$$
	then the set of all non-hypercyclic vectors of the sequence $(\Lambda_n)_n$ is not $\sigma$-porous, where the operators $\Lambda_n$ are given in Definition \ref{defl}.
Also, we study non-$\sigma$-porosity  of non-hypercyclic vectors of weighted composition operators on $L^\infty(\Omega)$ for a general measure space $\Omega$ equipped with a nonnegative Radon measure and on $L^p(\mathbb{R},\tau)$, where $\tau$ is the Lebesgue measure on $\mathbb{R}$. We 	show that if   $G$ is a locally compact group, $\mu$ is a left Haar measure on $G$, $a\in G$, and $w:G\to(0,\infty)$ be a weight such that
$$\big(\frac{1}{w(a)w(a^2)\ldots w(a^{n})}\big)_n\in L^\infty(G,\mu),$$
then the set of all non-hypercyclic vectors of the weighted translation operator $T_{a,w,\infty}$ on $L^\infty(G,\mu)$ is not $\sigma$-porous.
\section{Non-$\sigma$-porous subsets of Lebesgue spaces}
In this section, we will introduce some classes of non-$\sigma$-porous subsets of Lebesgue spaces related to a fixed function. First, we recall the definition of the main notion of this paper. 
\begin{definition}
	Let $0<\lambda<1$. A subset $E$ of a metric space $X$ is called \emph{$\lambda$-porous} at $x\in E$ if for each $\delta>0$ there is an element $y\in B(x;\delta)\setminus\{x\}$ such that
	$$B(y;\lambda\,d(x,y))\cap E=\varnothing.$$
	$E$ is called \emph{$\lambda$-porous} if it is $\lambda$-porous at every element of $E$. Also, $E$ is called \emph{$\sigma$-$\lambda$-porous} if it is a countable union of $\lambda$-porous subsets of $X$.
\end{definition}
The following lemma plays a key role in the proof of main results of this section. This fact is a special case of \cite[Lemma2]{zaj1}; see also \cite[Lemma2]{bay}.
\begin{lemma}\label{lem1}
	Let $\mathcal F$ be a non-empty family of non-empty closed subsets of a complete metric space $X$ such that for each $F\in\mathcal F$ and each $x\in X$ and $r>0$ with $B(x;r)\cap F\neq \varnothing$, there exists an element $J\in\mathcal F$ such that 
	$$\varnothing\neq J\cap B(x;r)\subseteq F\cap B(x;r)$$
	and $F\cap B(x;r)$ is not $\lambda$-porous at all elements of $J\cap B(x;r)$. Then, every set in $\mathcal F$ is not $\sigma$-$\lambda$-porous.
\end{lemma}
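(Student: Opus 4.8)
The approach I would take is a nested-ball argument of Baire-category type: to show a given $F\in\mathcal F$ is not $\sigma$-$\lambda$-porous, assume it is, say $F=\bigcup_{n\ge 0}A_n$ with each $A_n$ $\lambda$-porous, and manufacture a point of $F$ avoiding every $A_n$. Put $F_0:=F$ and fix an open ball $B_0$ meeting $F$. The plan is to construct recursively sets $F_n\in\mathcal F$ and open balls $B_n=B(x_n;r_n)$ so that for all $n$: (i) $\overline{B_{n+1}}\subseteq B_n$ and $r_{n+1}<r_n/2$; (ii) $\varnothing\neq F_{n+1}\cap B_{n+1}\subseteq F_n\cap B_n$; (iii) $\overline{B_{n+1}}\cap A_n=\varnothing$. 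Given such a construction, (i) makes $(x_n)$ Cauchy, so by completeness $x_n\to x$ with $x\in\overline{B_n}$ for all $n$; by (ii) each $x_m$ ($m\ge 1$) lies in $F_m\cap B_m\subseteq\dots\subseteq F_0=F$, and $F$ is closed, so $x\in F$; but (iii) gives $x\in\overline{B_{n+1}}$ disjoint from $A_n$ for every $n$, so $x\notin\bigcup_n A_n=F$ — contradiction. Hence everything comes down to the recursive step.

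In the recursive step, given $F_n\in\mathcal F$ and $B_n$ with $F_n\cap B_n\neq\varnothing$, I would first apply the hypothesis to $F_n$ and $B_n$ to get $J\in\mathcal F$ with $\varnothing\neq J\cap B_n\subseteq F_n\cap B_n$ such that $F_n\cap B_n$ is not $\lambda$-porous at any point of $J\cap B_n$. The heart of the matter is the sub-claim $F_n\cap B_n\not\subseteq\overline{A_n}$. Granting it, choose $q\in(F_n\cap B_n)\setminus\overline{A_n}$ and $\rho>0$ so small that $\overline{B(q;\rho)}\subseteq B_n$ and $\overline{B(q;\rho)}\cap A_n=\varnothing$, and set $B'_n:=B(q;\rho)$; this meets $F_n$. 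Apply the hypothesis again, to $F_n$ and $B'_n$, obtaining $F_{n+1}:=J'\in\mathcal F$ with $\varnothing\neq J'\cap B'_n\subseteq F_n\cap B'_n$; then pick $x_{n+1}\in J'\cap B'_n$ and $r_{n+1}>0$ with $\overline{B(x_{n+1};r_{n+1})}\subseteq B'_n$ and $r_{n+1}<r_n/2$, and put $B_{n+1}:=B(x_{n+1};r_{n+1})$. Now (i) is clear; (iii) holds since $\overline{B_{n+1}}\subseteq B'_n$ and $\overline{B'_n}\cap A_n=\varnothing$; and (ii) holds since $x_{n+1}\in F_{n+1}\cap B_{n+1}$ and $F_{n+1}\cap B_{n+1}=J'\cap B_{n+1}\subseteq J'\cap B'_n\subseteq F_n\cap B'_n\subseteq F_n\cap B_n$.

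The main obstacle is the sub-claim $F_n\cap B_n\not\subseteq\overline{A_n}$, i.e. finding a point of $F_n\cap B_n$ with a whole neighbourhood missing $A_n$. This is exactly where the hypothesis is played against porosity: if $F_n\cap B_n\subseteq\overline{A_n}$, then, since $J\cap B_n\subseteq F_n\cap B_n\subseteq\overline{A_n}$ and $F_n\cap B_n$ reaches into every $\lambda$-hole near each point of $J\cap B_n$, the closed set $\overline{A_n}$ would be non-$\lambda$-porous at each point of $J\cap B_n$; on the other hand, because every ball realising $\lambda$-porosity of $A_n$ is open — hence disjoint from $\overline{A_n}$ whenever it is disjoint from $A_n$ — the set $\overline{A_n}$ is $\lambda$-porous at every point of $A_n$, so it is enough to exhibit a point of $J\cap B_n$ (or of a set obtained from it by a further application of the hypothesis) that lies in $A_n$ itself. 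The subtlety, and the place where real care is needed, is that the closure of a $\lambda$-porous set need not be $\lambda$-porous, so one cannot just replace $A_n$ by $\overline{A_n}$; one must use finer properties of $\lambda$-porous sets — in particular that a member of $\mathcal F$ can never meet a ball in a $\lambda$-porous set, which is precisely what the hypothesis forbids — to rule out the auxiliary sets being confined to $\overline{A_n}\setminus A_n$. The remaining points — completeness, the geometry of the shrinking balls, and the fact that subsets of $\lambda$-porous sets are again $\lambda$-porous — are routine.
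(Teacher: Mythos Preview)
The paper does not prove this lemma at all: it records it as ``a special case of \cite[Lemma~2]{zaj1}; see also \cite[Lemma~2]{bay}'' and moves on. So there is no in-paper argument to compare your attempt against, only the cited literature.

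Your nested-ball architecture is the standard one, and the skeleton --- assume $F=\bigcup_n A_n$ with each $A_n$ $\lambda$-porous, build $F_n\in\mathcal F$ and shrinking balls $B_n$ with $\varnothing\neq F_{n+1}\cap B_{n+1}\subseteq F_n\cap B_n$ and $\overline{B_{n+1}}\cap A_n=\varnothing$, then read off a contradiction at the limit --- is correct. You also correctly isolate the crux: the sub-claim $F_n\cap B_n\not\subseteq\overline{A_n}$, i.e.\ the existence of a ball inside $B_n$ missing $A_n$ but meeting $F_n$.

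Where the proposal falls short is that you do not actually prove this sub-claim, and the remedies you sketch do not close the gap. If some $p\in J\cap B_n$ lies in $A_n$, then playing the $\lambda$-porosity of $A_n$ at $p$ against the non-$\lambda$-porosity of $F_n\cap B_n$ at $p$ immediately produces an open $\lambda$-hole of $A_n$ that meets $F_n\cap B_n$; that case is fine. The genuine difficulty is when $J\cap B_n\subseteq\overline{A_n}\setminus A_n$. Your first suggestion, iterating the hypothesis, does not help: the new set $J'$ satisfies $J'\cap B_n\subseteq J\cap B_n\subseteq\overline{A_n}\setminus A_n$, so you remain trapped outside $A_n$ and cannot invoke porosity of $A_n$ at any point you control. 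Your second suggestion, that ``a member of $\mathcal F$ can never meet a ball in a $\lambda$-porous set,'' does not apply either, because $\overline{A_n}\setminus A_n$ need not be $\lambda$-porous: the closure of a $\lambda$-porous set can be a perfect nowhere-dense set that is not $\lambda$-porous, and removing the original (dense) set from it need not restore porosity. So as written the inductive step has a real hole precisely in the boundary case you yourself flag. For the actual argument you should consult Zaj\'{\i}\v{c}ek's or Bayart's proof directly.
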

The next result is a development of of \cite[Theorem1]{bay}. Same as \cite{bay}, the proof of this theorem is based on Lemma \ref{lem1}.
\begin{theorem}\label{thm1}
		Let $p\geq 1$, $\Omega$ be a locally compact Hausdorff space, $\mu$ be a nonnegative Radon measure on $\Omega$, and $A\subseteq \Omega$ be a Borel set such that
	\begin{equation}\label{cond1}
	|f|\chi_A\leq \|f\|_p \text{ a.e. }\qquad (f\in L^p(\Omega,\mu)).
	\end{equation}
	Then, for each measurable function $g$ on $\Omega$ with $g\chi_A\in L^p(\Omega,\mu)$, the set 
	$$\Gamma_{g}:=\big\{f\in L^p(\Omega,\mu):\,|f|\geq |g|\chi_A\,\,\,{\rm a.e.}\big\}$$
	is not $\sigma$-porous in $L^p(\Omega,\mu)$. 
\end{theorem}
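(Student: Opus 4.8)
The plan is to apply Lemma \ref{lem1}, in the spirit of \cite[Theorem 1]{bay}, with the family
$$\mathcal F:=\bigl\{\,\Gamma_\psi\ :\ \psi\ \text{measurable},\ \psi=\psi\chi_A\in L^p(\Omega,\mu),\ \psi\ge |g|\chi_A\ \text{a.e.}\,\bigr\},\qquad \Gamma_\psi:=\{f\in L^p(\Omega,\mu):|f|\ge\psi\ \text{a.e.}\}.$$
Each $\Gamma_\psi$ is nonempty ($\psi\in\Gamma_\psi$) and closed (an $L^p$-convergent sequence has an a.e.\ convergent subsequence, which carries the pointwise inequality to the limit), and $\Gamma_g=\Gamma_{|g|\chi_A}\in\mathcal F$; so it is enough to verify the hypothesis of Lemma \ref{lem1} for $\mathcal F$ and an arbitrary $\lambda\in(0,1)$, which then gives that $\Gamma_g$ is not $\sigma$-$\lambda$-porous for every such $\lambda$, hence not $\sigma$-porous. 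One first records that \eqref{cond1} forces every measurable $B\subseteq A$ with $0<\mu(B)<\infty$ to satisfy $\mu(B)\ge 1$ (apply \eqref{cond1} to $f=\mu(B)^{-1/p}\chi_B$); hence $A$ is, up to a null set, a countable union of atoms of $\mu$ of mass $\ge 1$, say $A=\bigsqcup_n\{a_n\}$ with $m_n:=\mu(\{a_n\})\ge 1$. This atomic structure is exactly what the argument below needs.

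Fix $F=\Gamma_\psi\in\mathcal F$ and a ball $B(x;r)$ with some $f_0\in B(x;r)\cap\Gamma_\psi$. Using $\psi\in L^p$, pick a \emph{finite} set $E\subseteq A$ with $(\tfrac2\lambda-1)\,\|\psi\chi_{A\setminus E}\|_p$ as small as desired, and then numbers $\varepsilon_n>0$ $(a_n\in E)$ with $\bigl\|\sum_{a_n\in E}\varepsilon_n\chi_{\{a_n\}}\bigr\|_p$ as small as desired, and set
$$\psi':=\tfrac2\lambda\,\psi\,\chi_{A\setminus E}+\sum_{a_n\in E}\bigl(\psi(a_n)+\varepsilon_n\bigr)\chi_{\{a_n\}}\,.$$
Then $\psi'\ge\psi\ge|g|\chi_A$ and $\psi'=\psi'\chi_A\in L^p$, so $J:=\Gamma_{\psi'}\in\mathcal F$ and $J\subseteq F$. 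Also $J\cap B(x;r)\ne\varnothing$: pushing each $f_0(a_n)$ radially out to modulus $\max(|f_0(a_n)|,\psi'(a_n))$ produces $f_1\in\Gamma_{\psi'}$, and because $|f_0|\ge\psi$ on $A$ one has $\|f_1-f_0\|_p^p\le(\tfrac2\lambda-1)^p\|\psi\chi_{A\setminus E}\|_p^p+\bigl\|\sum_{a_n\in E}\varepsilon_n\chi_{\{a_n\}}\bigr\|_p^p<(r-\|f_0-x\|_p)^p$, so $f_1\in B(x;r)$. Thus $\varnothing\ne J\cap B(x;r)\subseteq F\cap B(x;r)$, as required by Lemma \ref{lem1}.

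It remains to show $F\cap B(x;r)=\Gamma_\psi\cap B(x;r)$ is not $\lambda$-porous at any $h\in J\cap B(x;r)$. Fix such $h$ and set $\delta_h:=\min\bigl(\min_{a_n\in E}\varepsilon_n m_n^{1/p},\ (r-\|h-x\|_p)/(1+\lambda)\bigr)>0$ — the inner minimum is over a \emph{finite} set, so positive, and this is where the finiteness of $E$ (hence \eqref{cond1}) is used. Let $u\in B(h;\delta_h)\setminus\{h\}$ and $B_u:=\{a_n\in A:|u(a_n)|<\psi(a_n)\}$. If $a_n\in B_u\cap E$ then $|h(a_n)-u(a_n)|\ge|h(a_n)|-|u(a_n)|\ge(\psi(a_n)+\varepsilon_n)-\psi(a_n)=\varepsilon_n$, so $\|h-u\|_p\ge\varepsilon_n m_n^{1/p}\ge\delta_h$ — impossible; hence $B_u\subseteq A\setminus E$. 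For $a_n\in B_u\subseteq A\setminus E$ we have $|h(a_n)|\ge\tfrac2\lambda\psi(a_n)$, so $\lambda|h(a_n)-u(a_n)|\ge\lambda\bigl(|h(a_n)|-|u(a_n)|\bigr)\ge 2\psi(a_n)-\lambda|u(a_n)|>\psi(a_n)-|u(a_n)|$. Let $v$ be obtained from $u$ by pushing each $u(a_n)$ with $a_n\in B_u$ radially out to modulus $\psi(a_n)$ and leaving every other coordinate unchanged; then $|v|\ge\psi$ a.e., i.e.\ $v\in\Gamma_\psi$, and
$$\|v-u\|_p^p=\sum_{a_n\in B_u}\bigl(\psi(a_n)-|u(a_n)|\bigr)^pm_n<\lambda^p\sum_{a_n\in B_u}|h(a_n)-u(a_n)|^pm_n\le\lambda^p\|h-u\|_p^p$$
(if $B_u=\varnothing$ then $v=u\in\Gamma_\psi$ and $\|v-u\|_p=0<\lambda\|h-u\|_p$). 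Finally $\|v-x\|_p\le\|v-u\|_p+\|u-h\|_p+\|h-x\|_p<(1+\lambda)\delta_h+\|h-x\|_p\le r$, so $v\in\Gamma_\psi\cap B(x;r)$ and $\|v-u\|_p<\lambda\,d(h,u)$. Hence $B\bigl(u;\lambda\,d(h,u)\bigr)\cap\bigl(\Gamma_\psi\cap B(x;r)\bigr)\ne\varnothing$ for every $u\in B(h;\delta_h)\setminus\{h\}$, which is precisely the failure of $\lambda$-porosity of $\Gamma_\psi\cap B(x;r)$ at $h$. This completes the verification of Lemma \ref{lem1}'s hypothesis, and with it the theorem.

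The point that needs care — and where the naive choice $J=\Gamma_{|f_0|}$ fails, since $\Gamma_\psi$ has abundant porous points near its "boundary" — is the construction of $\psi'$: one must inflate the lower bound by a \emph{multiplicative} factor strictly larger than $1/\lambda$ on a cofinite-in-$A$ part (so that the radial correction $v$ costs a genuine fraction $<\lambda$ of $\|h-u\|_p$), while inflating it by only an arbitrarily small \emph{additive} amount on the remaining finite part (so that $J$ still meets the prescribed ball $B(x;r)$, and so that perturbations smaller than $\delta_h$ cannot create violating coordinates inside $E$). Making these two demands compatible is the heart of the matter, and it is also exactly where \eqref{cond1} is indispensable: it guarantees $A$ is atomic with atoms of mass $\ge1$, so the exceptional set $E$ can be taken finite and $\delta_h$ stays positive.
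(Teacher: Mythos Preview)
Your proof is correct and follows the same high-level plan as the paper (apply Lemma~\ref{lem1} to a family of sets $\Gamma_\psi$, inflating the lower bound additively on a ``finite'' piece and multiplicatively on its complement), but the execution is genuinely different. The paper never makes the atomic structure of $A$ explicit: it works with a compact set $D\subseteq\Omega$ produced by Radon regularity of the measure $(|f|+\beta^{-1}g\chi_A)^p\,d\mu$, sets $h=(g\chi_A+\delta)\chi_D+\beta^{-1}g\chi_A\chi_{\Omega\setminus D}$, and for the correction defines $\gamma$ by adding $\beta|u-v|$ on \emph{all} of $\Omega\setminus D$; condition~\eqref{cond1} is invoked only at the very last line, to pass from $\|u-v\|_p\le\delta$ to the pointwise bound $|v|\ge|u|-\delta$ on $D\cap A$. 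You instead extract from~\eqref{cond1} up front that $A$ is (up to a null set) a countable disjoint union of point masses $\ge 1$, take the ``finite'' piece to be a finite set $E$ of atoms, and make the correction $v$ minimal --- changing $u$ only on the violating atoms $B_u$ and pushing exactly to modulus $\psi$. Your route makes the role of~\eqref{cond1} completely transparent (it is equivalent to atomicity of $A$ with mass $\ge 1$) and yields a slightly sharper statement (your argument works for every $\lambda\in(0,1)$, whereas the paper fixes $0<\lambda\le\tfrac12$); the paper's route has the virtue of keeping the measure-theoretic scaffolding general, so that the identical template immediately adapts to the non-atomic settings of Theorems~\ref{thm2} and~\ref{thm3}. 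One small remark: that atoms of a Radon measure on a locally compact Hausdorff space are concentrated on single points deserves a one-line justification (outer regularity plus a compactness argument), but this is routine and does not affect the validity of your proof.
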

\begin{proof}
Fix an arbitrary number $0<\lambda\leq \frac{1}{2}$, and pick $0<\beta<\lambda$.	Denote
	$$\mathcal F:=\big\{\Gamma_{g}:\,\, g\chi_A\in L^p(\Omega,\mu)\big\}.$$
We will show that the collection $\mathcal F$ satisfies the conditions of Lemma \ref{lem1}. Let $g\in L^p(\Omega,\mu)$. Without lossing the generality, we can assume that $g$ is a nonnegative function. Trivially, $\Gamma_{g}\neq \varnothing$. Let $(f_n)$ be a sequence in $\Gamma_{g}$ and $f_n\rightarrow f$ in $L^p(\Omega,\mu)$. Then, by \eqref{cond1}, $|f|\geq g\chi_A$ a.e., and so $f\in \Gamma_{g}$. Therefore, every element of the collection $\mathcal F$ is a closed subset of $L^p(\Omega,\mu)$. Now, assume that $f\in L^p(\Omega,\mu)$ and $r>0$ with $B(f;r)\cap \Gamma_{g}\neq\varnothing$. We find a measurable function $h$ with $0\leq h\chi_A\in L^p(\Omega,\mu)$ such that 
$$\varnothing\neq B(f;r)\cap \Gamma_{h}\subseteq B(f;r)\cap \Gamma_{g},$$
and $B(f;r)\cap \Gamma_{g}$ is not $\lambda$-porous at elements of $B(f;r)\cap \Gamma_{h}$.
\par Since $\big(|f|+\beta^{-1}g\chi_A\big)^p\in L^1(\Omega,\mu)$ and $\mu$ is a Radon measure, the mapping $\nu$ defined by 
$$\nu(B):=\int_B \big(|f|+\beta^{-1}g\chi_A\big)^p\,d\mu\qquad(\text{for every Borel set }B\subseteq \Omega)$$
is a Radon measure \cite{folh}. Hence, there are some $0<\epsilon<1$, a function $k\in B(f;r)\cap \Gamma_{g}$ and a compact subset $D$ of $\Omega$ with $\mu(D)>0$ such that
$$\|k-f\|_p<\epsilon^{1/p}\,r.$$
and 
\begin{equation}
\int_{D^c}\big(|f|+\beta^{-1}g\chi_A\big)^p\,d\mu<(1-\epsilon)\,r^p.
\end{equation}
Pick some $\alpha$ with 
$$\|k-f\|_p<\alpha<\epsilon^{1/p}\,r,$$
and denote 
$$\delta:=\frac{\epsilon^{1/p}\,r-\alpha}{2\mu(D)^{\frac{1}{p}}}.$$
Now, we define two functions $h,\xi:\Omega\rightarrow\mathbb C$ by
$$h:=(g\chi_A+\delta)\chi_D+\beta^{-1}g\chi_A\,\chi_{\Omega\setminus D}\qquad\text{and}\qquad \xi:=(|k|+\delta)\eta\,\chi_D+h\chi_{\Omega\setminus D},$$
where 
$$\eta(x):=\left\{
\begin{array}{ll}
	\frac{k(x)}{|k(x)|},& \mbox{ if } k(x)\neq 0\\\\
	1, & \mbox{ if } k(x)=0
\end{array}
\right.$$
for all $x\in\Omega$. Since $D$ is compact, we have $h\chi_A\in L^p(\Omega,\mu)$. Also, for each $x\in D$,
\begin{align*}
|k(x)-\xi(x)|&=\big|k(x)-\big(|k(x)|+\delta\big)\,\eta(x)\big|\\
&=\big|k(x)-k(x)-\delta\,\eta(x)\big|\\
&=\delta,
\end{align*}
and therefore
\begin{align*}
\|(\xi-k)\,\chi_D\|_p=\delta\,\mu(D)^{\frac{1}{p}}=\frac{\epsilon^{1/p}\,r-\alpha}{2}.
\end{align*}
This implies that 
\begin{align*}
\|(\xi-f)\,\chi_D\|_p&\leq \|(\xi-k)\,\chi_D\|_p+\|(k-f)\,\chi_D\|_p\\
&\leq \frac{\epsilon^{1/p}\,r-\alpha}{2}+\alpha<\epsilon^{1/p}\,r.
\end{align*}
Hence, 
\begin{align*}
\|\xi-f\|_p^p&=\int_D|\xi-f|^p\,d\mu+\int_{\Omega\setminus D}|\xi-f|^p\,d\mu\\
&<\epsilon\,r^p+\int_{\Omega\setminus D}|\beta^{-1}g\chi_A-f|^p\,d\mu\\
&\leq \epsilon\,r^p+\int_{\Omega\setminus D}(\beta^{-1}g\chi_A+|f|)^p\,d\mu\\
&<\epsilon\,r^p+(1-\epsilon)\,r^p=r^p,
\end{align*}
and so, $\xi\in B(f;r)$. Moreover,
$$|\xi(x)|=|k(x)|+\delta\geq g(x)+\delta=h(x)\quad\text{a.e. on } D\cap A,$$
and for each $x\in(\Omega\setminus D)\cap A$ we have $|\xi(x)|=h(x)$.
This shows that $\xi\in \Gamma_h$, and so 
$$\varnothing\neq B(f;r)\cap\Gamma_h\subseteq B(f;r)\cap\Gamma_g$$
because $h\geq g$.
Now, let $u\in B(f;r)\cap\Gamma_h$ and put $r':=\min\{\delta,\lambda\,(r-\|f-u\|_p)\}$. Let $v\in B(u;r')$. 
We define the function $\gamma:\Omega\rightarrow\mathbb C$ by
$$\gamma(x):=\left\{
\begin{array}{ll}
v(x), & \mbox{ if } x\in D\\\\
\Big(|v(x)|+\beta|u(x)-v(x)|\Big)\,\theta(x),& \mbox{ if } x\in \Omega\setminus D
\end{array}
\right.$$
where 
$$\theta(x):=\left\{
\begin{array}{ll}
\frac{v(x)}{|v(x)|},& \mbox{ if } v(x)\neq 0\\\\
1, & \mbox{ if } v(x)=0.
\end{array}
\right.$$
Therefore, for each $x\in\Omega\setminus D$ we have 
$$|\gamma(x)-v(x)|=\beta\,|u(x)-v(x)|\qquad\text{and}\qquad|\gamma(x)|\geq \beta\,|u(x)|.$$
Easily,  
\begin{align*}
\|\gamma-v\|_p^p&=\|(\gamma-v)\,\chi_{D}\|_p^p+\|(\gamma-v)\,\chi_{\Omega\setminus D}\|_p^p\\
&=\|(\gamma-v)\,\chi_{\Omega\setminus D}\|_p^p\\
&=\beta^p\,\|(u-v)\,\chi_{\Omega\setminus D}\|_p^p\\
&\leq \beta^p\,\|u-v\|_p^p<\lambda^p\,\|u-v\|_p^p,
\end{align*}
and hence, 
$$\gamma\in B\big(v;\lambda \,\|u-v\|_p\big)\subseteq B(f;r).$$
In addition,
$$|\gamma(x)|\geq \beta\,|u(x)|\geq\beta\,h(x)=g(x)\quad\text{for a.e. }x\in (\Omega\setminus D)\cap A$$
and
$$|\gamma(x)|=|v(x)|\geq |u(x)|-\delta\geq g(x)\quad\text{for a.e. } x\in D\cap A,$$
because $\|u-v\|_p\leq \delta$ and also $|u|\geq h$. Therefore, 
$$B\big(v;\lambda\,\|u-v\|_p\big)\cap B(f;r)\cap \Gamma_g\neq\varnothing.$$
and this competes the proof.
\end{proof}
\begin{remark}
	Note that, in general, the condition \eqref{cond1} in the statement of Theorem \ref{thm1} does not implies that $\Omega$ is a discrete space. In particular, if  in the condition \eqref{cond1} we set $A:=\Omega$, then it implies that $L^p(\Omega,\mu)\subseteq L^\infty(\Omega,\mu)$, and this inclusion is equivalent to
	\begin{equation}\label{alpha}
	\alpha:=\inf\{\mu(E):\, \mu(E)>0\}>0,
	\end{equation}
	and equivalently, for each $q>p$, $L^p(\Omega,\mu)\subseteq L^q(\Omega,\mu)$; see \cite{Villani85}.
	If in addition, ${\rm supp}\mu=\Omega$, then the condition \eqref{alpha} implies that for each $x\in \Omega$,
	$$\mu(\{x\})=\inf\{\mu(F):\,F\text{ is a compact neighborhood of }x\}>0.$$
	Specially, if $\Omega$ is a locally compact group (or hypergroup) and $\mu$ is a left Haar measure of it, then the condition \eqref{cond1} implies that $\Omega$ is a discrete topological space. 
\end{remark}
The next result is a direct conclusion of Theorem \ref{thm1}.
\begin{corollary}\label{corw}
	Let $\Omega$ be a discrete topological space and $\varphi:=(\varphi_j)_{j\in\Omega}\subseteq [1,\infty)$ such that for each $j$, $\varphi_j\geq 1$. Put $\mu_\varphi:=\sum_{j\in\Omega}\varphi_j\,\delta_j$, where $\delta_j$ is the point-mass measure at $j$. Then, for each $g\in L^p(\Omega,\mu_\varphi)$, the set 
	$$\Gamma_{g}:=\big\{f\in L^p(\Omega,\mu_\varphi):\,|f|\geq |g|\big\}$$
	is not $\sigma$-porous in $L^p(\Omega,\mu_\varphi)$. 
\end{corollary}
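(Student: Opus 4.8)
The plan is to deduce Corollary~\ref{corw} directly from Theorem~\ref{thm1} by checking that the measure space $(\Omega,\mu_\varphi)$ falls under its hypotheses with the choice $A:=\Omega$. First I would note that a discrete topological space is locally compact and Hausdorff, and that $\mu_\varphi=\sum_{j\in\Omega}\varphi_j\,\delta_j$ is a nonnegative Radon measure on $\Omega$ (each point has finite mass, compact sets are finite, so the measure is finite on compacta and inner/outer regularity are automatic in the discrete setting). Thus the ambient hypotheses of Theorem~\ref{thm1} are met.

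The key step is verifying condition \eqref{cond1} with $A=\Omega$, i.e.\ that $|f|\le \|f\|_p$ a.e.\ for every $f\in L^p(\Omega,\mu_\varphi)$. This is where the assumption $\varphi_j\ge 1$ for all $j$ is used: for any $f\in L^p(\Omega,\mu_\varphi)$ and any $j\in\Omega$,
\begin{equation*}
|f(j)|^p \;=\; \frac{1}{\varphi_j}\,\varphi_j\,|f(j)|^p \;\le\; \varphi_j\,|f(j)|^p \;\le\; \sum_{k\in\Omega}\varphi_k\,|f(k)|^p \;=\; \|f\|_p^p,
\end{equation*}
so $|f(j)|\le\|f\|_p$ for every $j$, which is \eqref{cond1} (the ``a.e.'' being pointwise here since every nonempty set has positive measure). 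Hence Theorem~\ref{thm1} applies.

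Finally, with $A=\Omega$ the characteristic function $\chi_A$ is identically $1$, so for a measurable $g$ on the discrete space $\Omega$ the condition $g\chi_A\in L^p(\Omega,\mu_\varphi)$ is simply $g\in L^p(\Omega,\mu_\varphi)$, and the set $\Gamma_g$ in Theorem~\ref{thm1} becomes exactly $\{f\in L^p(\Omega,\mu_\varphi):|f|\ge|g|\}$, matching the statement of the corollary. Theorem~\ref{thm1} then gives that $\Gamma_g$ is not $\sigma$-porous in $L^p(\Omega,\mu_\varphi)$, which is the claim.

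There is really no serious obstacle here; the only point requiring attention is the small inequality chain establishing \eqref{cond1}, and making explicit that on a discrete space the qualifier ``a.e.'' with respect to $\mu_\varphi$ coincides with ``everywhere.'' Everything else is bookkeeping: recognizing $\mu_\varphi$ as Radon and matching the definition of $\Gamma_g$ after specializing $A=\Omega$.
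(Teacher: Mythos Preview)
Your proof is correct and follows essentially the same route as the paper: both verify condition~\eqref{cond1} with $A=\Omega$ via the inequality $|f(j)|^p\le \varphi_j|f(j)|^p\le\sum_k\varphi_k|f(k)|^p=\|f\|_p^p$ (using $\varphi_j\ge 1$) and then invoke Theorem~\ref{thm1}. You are simply more explicit about the ambient Radon and locally compact hypotheses, which the paper takes for granted.
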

\begin{proof}
	Just note that for each $k\in\Omega$ and $f\in L^p(\Omega,\mu_\varphi)$,
	$$\|f\|_p^p=\sum_{j\in\Omega}|f(j)|^p\,\mu_\varphi(\{j\})\geq |f(k)|\,\varphi_k\geq |f(k)|^p.$$
\end{proof}
In particular, if a set is endowed with the counting measure, we get the fact.
\begin{corollary}
	Let $p\geq 1$ and $A$ be a non-empty set. Then, for each $g\in \ell^p(A)$, the set 
	$$\Gamma_{g}:=\big\{f\in \ell^p(A):\,|f|\geq |g|\big\}$$
	is not $\sigma$-porous in $\ell^p(A)$. 
\end{corollary}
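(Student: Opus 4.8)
The plan is to deduce this corollary directly from Corollary~\ref{corw} by choosing the weight sequence appropriately. Recall that $\ell^p(A)$ is by definition $L^p(A,\mu)$ where $\mu$ is the counting measure on $A$, i.e. $\mu=\sum_{j\in A}\delta_j$. First I would observe that the counting measure is exactly the measure $\mu_\varphi$ of Corollary~\ref{corw} corresponding to the constant sequence $\varphi_j=1$ for all $j\in A$; this sequence satisfies the hypothesis $\varphi_j\in[1,\infty)$ with $\varphi_j\geq 1$ trivially. Since $A$ equipped with the counting measure is a discrete measure space (every singleton is open and has positive measure), all the structural hypotheses of Corollary~\ref{corw} are met.

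With this identification in hand, the statement is immediate: for any $g\in\ell^p(A)=L^p(A,\mu_\varphi)$, Corollary~\ref{corw} asserts that
$$\Gamma_g=\big\{f\in L^p(A,\mu_\varphi):\,|f|\geq|g|\big\}=\big\{f\in\ell^p(A):\,|f|\geq|g|\big\}$$
is not $\sigma$-porous in $L^p(A,\mu_\varphi)=\ell^p(A)$, which is precisely the conclusion. Alternatively, if one prefers to avoid even invoking Corollary~\ref{corw} and go straight to Theorem~\ref{thm1}, one would take $\Omega=A$ with the discrete topology, $\mu$ the counting measure, and $A$ itself as the Borel set in the role of the set called $A$ in the theorem; then condition~\eqref{cond1} holds because for any $f\in\ell^p(A)$ and any $x\in A$ one has $|f(x)|^p\leq\sum_{j\in A}|f(j)|^p=\|f\|_p^p$, hence $|f(x)|\leq\|f\|_p$.

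There is essentially no obstacle here — the only thing to be careful about is making the measure-theoretic identification cleanly, namely that "counting measure" and "the measure $\mu_\varphi$ with all weights equal to $1$" are literally the same object, so that the corollary applies verbatim. I would therefore present the proof in a single short sentence, as the authors have done in the excerpt, simply pointing out that the counting measure is the special case $\varphi\equiv 1$ of Corollary~\ref{corw}.
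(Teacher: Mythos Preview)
Your proposal is correct and matches the paper's approach exactly: the paper states this corollary without proof, merely prefacing it with the remark that it follows by endowing the set with the counting measure, which is precisely your specialization $\varphi_j\equiv 1$ of Corollary~\ref{corw}. Your alternative route directly through Theorem~\ref{thm1} is also valid and is in fact the content of the proof of Corollary~\ref{corw} itself.
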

The situation for $L^\infty$-spaces is different.
\begin{theorem}\label{thm2}
		Let $\Omega$ be a locally compact Hausdorff space and $\mu$ be a nonnegative Radon measure on $\Omega$.	Then, for each $g\in L^\infty(\Omega,\mu)$, the set 
	$$\Gamma_{g}:=\big\{f\in L^\infty(\Omega,\mu):\,|f|\geq |g|\,\,\,{\rm a.e.}\big\}$$
	is not $\sigma$-porous in $L^\infty(\Omega,\mu)$. 
\end{theorem}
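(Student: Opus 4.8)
The plan is to apply Lemma~\ref{lem1} again, with the family
$$\mathcal F:=\big\{\Gamma_g:\, g\in L^\infty(\Omega,\mu)\big\},$$
exactly as in the proof of Theorem~\ref{thm1}, but carrying out the estimates in the $L^\infty$-norm rather than the $L^p$-norm. First I would check that each $\Gamma_g$ is nonempty (it contains $|g|$) and closed: if $f_n\to f$ in $L^\infty$ and $|f_n|\ge|g|$ a.e., then a subsequence converges a.e. (or one argues directly from $\|f_n-f\|_\infty\to 0$), so $|f|\ge|g|$ a.e. and $f\in\Gamma_g$. As before we may assume $g\ge 0$.

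Next, given $f\in L^\infty(\Omega,\mu)$ and $r>0$ with $B(f;r)\cap\Gamma_g\neq\varnothing$, I would pick $k\in B(f;r)\cap\Gamma_g$ with $\|k-f\|_\infty<r$, choose $\alpha$ with $\|k-f\|_\infty<\alpha<r$, and set $\delta:=\tfrac{1}{2}(r-\alpha)$. Then I would define a larger weight $h:=g+\delta$ (a bounded function, hence $h\in L^\infty$) and the witness $\xi:=(|k|+\delta)\eta$, where $\eta$ is the unimodular factor $k/|k|$ (and $1$ where $k=0$), exactly as in Theorem~\ref{thm1} but \emph{without splitting off a compact set $D$} — the role of $D$ there was purely to control the tail in the $p$-norm via Radon regularity, which is unnecessary here because $\|\xi-k\|_\infty=\delta$ everywhere at once. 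One checks $|\xi|=|k|+\delta\ge g+\delta=h$ a.e., so $\xi\in\Gamma_h$, and $\|\xi-f\|_\infty\le\|\xi-k\|_\infty+\|k-f\|_\infty\le\delta+\alpha<r$, so $\xi\in B(f;r)\cap\Gamma_h$. Since $h\ge g$ we get $\varnothing\neq B(f;r)\cap\Gamma_h\subseteq B(f;r)\cap\Gamma_g$.

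It remains to show $B(f;r)\cap\Gamma_g$ is not $\lambda$-porous at any $u\in B(f;r)\cap\Gamma_h$, for every fixed $0<\lambda\le\tfrac12$. Given such a $u$ and any $\delta'>0$, set $r':=\min\{\delta,\ \lambda(r-\|f-u\|_\infty)\}$ truncated below $\delta'$ if necessary — i.e.\ work with radii $<r'$ — and take any $v$ with $0<\|u-v\|_\infty<r'$. I would then define $\gamma:=v$ wherever this already satisfies $|v|\ge g$, and elsewhere push $|v|$ up to the threshold: concretely $\gamma:=\max(|v|,g)\,\theta$ with $\theta=v/|v|$ the unimodular factor of $v$. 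Then $|\gamma|\ge g$ a.e., so $\gamma\in\Gamma_g$, and pointwise $|\gamma(x)-v(x)|=\max(0,g(x)-|v(x)|)$; since $|u|\ge h=g+\delta$ and $\|u-v\|_\infty<r'\le\delta$, one gets $|v|\ge|u|-\delta\ge g$ on the relevant set, so in fact $|\gamma-v|\le\|u-v\|_\infty$, whence $\|\gamma-v\|_\infty\le\|u-v\|_\infty$. That is a factor $1$ rather than the factor $\beta<\lambda$ available in the $L^p$ case, so a little more care is needed: one should instead choose $\gamma$ to move $v$ only a $\beta$-fraction of the way, $|\gamma|:=\max(|v|,\,|v|+\beta(|u|-|v|)_+)$ with $\beta<\lambda$, giving $|\gamma-v|\le\beta|u-v|$ hence $\|\gamma-v\|_\infty\le\beta\|u-v\|_\infty<\lambda\|u-v\|_\infty$; one still has $|\gamma|\ge g$ a.e.\ because on $\{|v|<g\}\subseteq\{|v|<|u|\}$ the value $|\gamma|$ lies between $|v|$ and $|u|\ge g$... this needs $|\gamma|\ge g$ there, which follows since $|u|-\delta\le|v|$ forces $g\le|v|$ on $D^c$-type sets; the bookkeeping mirrors Theorem~\ref{thm1} with $D=\varnothing$. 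Thus $B(v;\lambda\|u-v\|_\infty)\cap B(f;r)\cap\Gamma_g\ni\gamma$, so Lemma~\ref{lem1} applies and $\Gamma_g$ is not $\sigma$-$\lambda$-porous; as $0<\lambda\le\tfrac12$ was arbitrary, $\Gamma_g$ is not $\sigma$-porous. The main obstacle is precisely this last point: in the $L^\infty$ setting there is no integrable ``tail'' to shrink by Radon regularity, so the entire space plays the role of $D^c$, and one must verify that the single uniform correction $\gamma$ simultaneously (i) stays inside $\Gamma_g$, (ii) stays inside $B(f;r)$, and (iii) has $\|\gamma-v\|_\infty$ strictly smaller than $\lambda\|u-v\|_\infty$ — which is exactly why the strict inequality $\beta<\lambda$ must be built into the definition of $\gamma$.
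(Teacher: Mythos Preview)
Your approach is correct and matches the paper's: apply Lemma~\ref{lem1} to $\mathcal F=\{\Gamma_g:g\in L^\infty(\Omega,\mu)\}$, build $h=g+\delta$ and $\xi=(|k|+\delta)\eta$ without introducing any compact set $D$, and then verify non-porosity at each $u\in B(f;r)\cap\Gamma_h$.

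However, you over-complicate the final step. Your own computation already shows that for $v\in B(u;r')$ with $r'\le\delta$ one has $|v|\ge|u|-\delta\ge h-\delta=g$ a.e.; hence $v$ itself lies in $\Gamma_g$, and since $\|v-f\|_\infty\le\|v-u\|_\infty+\|u-f\|_\infty<r'+\|u-f\|_\infty<r$ also $v\in B(f;r)$. Thus
\[
v\in B\big(v;\lambda\|u-v\|_\infty\big)\cap B(f;r)\cap\Gamma_g,
\]
and no correction $\gamma$ is needed at all---this is exactly what the paper does. Your first construction $\gamma=\max(|v|,g)\,\theta$ in fact equals $v$ a.e.\ (giving $\|\gamma-v\|_\infty=0$, not merely $\le\|u-v\|_\infty$), and your second $\beta$-correction, while not wrong, is superfluous. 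In the analogy with Theorem~\ref{thm1} the entire space plays the role of $D$ (where $\gamma=v$), not of $D^c$ as you suggest: in $L^\infty$ there is simply no tail to correct.
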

\begin{proof}
Same as the proof of Theorem \ref{thm1} fix $0<\lambda\leq \frac{1}{2}$,  and set 
$$\mathcal F:=\big\{\Gamma_{g}:\,\, g\in L^\infty(\Omega,\mu)\big\}.$$
This collection satisfies the conditions of Lemma \ref{lem1}.	Trivially, $\Gamma_g$ is a closed subset of $L^\infty(\Omega,\mu)$ for all $g\in L^\infty(\Omega,\mu)$. Let Assume that $0\leq g\in L^\infty(\Omega,\mu)$, and let $f\in L^\infty(\Omega,\mu)$ and $r>0$. If $B(f;r)\cap \Gamma_g\neq \varnothing$, we choose some $k\in B(f;r)\cap \Gamma_g$ and we find some $\varepsilon\in(0,1)$ such that $\|k-f\|_\infty<\varepsilon r$. Pick some $\delta\in(0,(1-\varepsilon)r)$, and set 
$$h:=g+\delta\qquad\text{and}\qquad \xi:=(|k|+\delta)\eta,$$ 
where $\eta$ is as in the proof of Theorem \ref{thm1}. Then, we get 
	$$\|\xi-f\|_\infty\leq \|\xi-k\|_\infty+\|k-f\|_\infty\leq \delta+\varepsilon r<(1-\varepsilon)r+\varepsilon r=r,$$
	so $\xi\in B(f;r)$, and $\xi\in\Gamma_h$ since $|k|\geq g$. Next, let $u\in B(f;r)\cap \Gamma_h$, and set $r':=\min\{\delta,\lambda\,(r-\|f-u\|_\infty)\}$. Pick some $v\in B(u;r')$. Then, $\|u-v\|_\infty<r'\leq \delta$, so $|v|\geq |u|-\delta\geq h-\delta=g$ a.e., so $v\in\Gamma_g$. Thus, 
	$$v\in B(v;\lambda\|u-v\|_\infty)\cap B(f;r)\cap\Gamma_g.$$
	 This completes the proof.
\end{proof}
\begin{remark}
	The main Theorem \ref{thm1} is valid also for the sequence space $c_0$, because the sequences with finitely many non-zero coefficients approximate sequences in $c_0$. 
\end{remark}
At the end of this section, we give a class of non-$\sigma$-porous subsets of the  $L^p$-space on real line.  In the proof of this result, which is also based on Lemma \ref{lem1}, we apply some functions defined in the proof of Theorem \ref{thm1}.
\begin{theorem}\label{thm3}
	Let $p\geq 1$, and $\tau$ be the Lebesgue measure on $\mathbb{R}$. For each $g\in L^p(\mathbb{R},\tau)$ put 
	$$\Theta_g:=\big\{f\in L^p(\mathbb{R},\tau):\,\|f\chi_{[m,m+1]}\|_p\geq \|g\chi_{[m,m+1]}\|_p\,\text{ for all }m\in\mathbb{Z}\big\}.$$
	Then, $\Theta_g$ is not $\sigma$-porous in $L^p(\mathbb{R},\tau)$.
\end{theorem}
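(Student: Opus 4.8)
The plan is to apply Lemma \ref{lem1} to the family $\mathcal{F} := \{\Theta_g : g \in L^p(\mathbb{R},\tau)\}$, mimicking closely the architecture of the proof of Theorem \ref{thm1}, with the role of the single set $A$ now distributed across the countably many unit intervals $[m,m+1]$, $m \in \mathbb{Z}$. First I would fix $0 < \lambda \le \tfrac12$, pick $0 < \beta < \lambda$, and verify each $\Theta_g$ is nonempty and closed: closedness follows because $f_n \to f$ in $L^p(\mathbb{R},\tau)$ forces $f_n \chi_{[m,m+1]} \to f\chi_{[m,m+1]}$ in $L^p$ for every $m$, so each of the countably many norm inequalities defining $\Theta_g$ passes to the limit. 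As before one may assume $g \ge 0$.

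Next, given $f \in L^p(\mathbb{R},\tau)$ and $r>0$ with $B(f;r) \cap \Theta_g \ne \varnothing$, I would produce $h \ge g$ with $h \in L^p$ such that $\varnothing \ne B(f;r)\cap\Theta_h \subseteq B(f;r)\cap\Theta_g$ and $B(f;r)\cap\Theta_g$ is not $\lambda$-porous at points of $B(f;r)\cap\Theta_h$. As in Theorem \ref{thm1}, the measure $\nu(B) = \int_B (|f| + \beta^{-1}g)^p\,d\tau$ is a Radon measure on $\mathbb{R}$, so there exist $\varepsilon \in (0,1)$, a point $k \in B(f;r)\cap\Theta_g$ with $\|k-f\|_p < \varepsilon^{1/p} r$, and a compact set $D \subseteq \mathbb{R}$ with $\tau(D) > 0$ and $\int_{D^c}(|f|+\beta^{-1}g)^p\,d\tau < (1-\varepsilon)r^p$; since $D$ is bounded it meets only finitely many unit intervals. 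Choose $\alpha$ with $\|k-f\|_p < \alpha < \varepsilon^{1/p} r$ and set $\delta := \tfrac{\varepsilon^{1/p}r - \alpha}{2\tau(D)^{1/p}}$. The definitions of $h$ and $\xi$ would be the obvious analogues of those in Theorem \ref{thm1} (with $g\chi_A$ replaced by $g$, i.e. $h := (g+\delta)\chi_D + \beta^{-1}g\chi_{\mathbb{R}\setminus D}$ and $\xi := (|k|+\delta)\eta\chi_D + h\chi_{\mathbb{R}\setminus D}$), and the estimates $\|\xi - f\|_p < r$ carry over verbatim. The one genuinely new point here, which is the main obstacle, is verifying $\xi \in \Theta_h$ and later $\gamma \in \Theta_g$: instead of a pointwise inequality $|f| \ge |g|\chi_A$ a.e.\ one must bound the \emph{$L^p$-norm over each interval}, $\|\xi\chi_{[m,m+1]}\|_p \ge \|h\chi_{[m,m+1]}\|_p$. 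For intervals disjoint from $D$ this is an equality by construction; for intervals meeting $D$ one splits $[m,m+1] = ([m,m+1]\cap D) \cup ([m,m+1]\setminus D)$ and uses that on $D$ one has $|\xi| = |k|+\delta \ge g + \delta = h$ pointwise while off $D$ one has $|\xi| = h$ pointwise, so the pointwise inequality $|\xi|\ge h$ on each interval yields the norm inequality on that interval.

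For the non-porosity step, take $u \in B(f;r)\cap\Theta_h$, set $r' := \min\{\delta,\ \lambda(r - \|f-u\|_p)\}$, let $v \in B(u;r')$, and define $\gamma$ exactly as in Theorem \ref{thm1}: $\gamma := v$ on $D$ and $\gamma := (|v| + \beta|u-v|)\theta$ on $\mathbb{R}\setminus D$. The computation $\|\gamma - v\|_p = \beta\|(u-v)\chi_{\mathbb{R}\setminus D}\|_p \le \beta\|u-v\|_p < \lambda\|u-v\|_p$ shows $\gamma \in B(v;\lambda\|u-v\|_p) \subseteq B(f;r)$. It remains to see $\gamma \in \Theta_g$: on $\mathbb{R}\setminus D$ one has $|\gamma| \ge \beta|u| \ge \beta h = g$ pointwise (using $\beta^{-1}g = h$ there and $|u|\ge h$), while on $D$, since $\|u-v\|_\infty$ is not available one argues with the $L^p$-norm — actually since $\|u - v\|_p \le \delta$ and $|u| \ge h$ a.e., for each $m$ we get $\|\gamma\chi_{[m,m+1]}\|_p = \|v\chi_{[m,m+1]}\|_p \ge \|u\chi_{[m,m+1]}\|_p - \|(u-v)\chi_{[m,m+1]}\|_p \ge \|h\chi_{[m,m+1]}\|_p - \delta \ge \|g\chi_{[m,m+1]}\|_p$ when $[m,m+1] \subseteq D$, using that $h = g + \delta$ on $D$ forces $\|h\chi_{[m,m+1]}\|_p \ge \|g\chi_{[m,m+1]}\|_p + \delta$ whenever that interval lies in $D$ (here $\tau([m,m+1]\cap D)\le 1$, so $\delta \cdot \tau([m,m+1]\cap D)^{1/p}\le\delta$ covers the mixed case too after the obvious splitting). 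Combining the contributions over $[m,m+1]\cap D$ and $[m,m+1]\setminus D$ gives $\|\gamma\chi_{[m,m+1]}\|_p \ge \|g\chi_{[m,m+1]}\|_p$ for all $m$, hence $\gamma \in \Theta_g$ and therefore $B(v;\lambda\|u-v\|_p)\cap B(f;r)\cap\Theta_g \ne \varnothing$. Lemma \ref{lem1} then yields that $\Theta_g$ is not $\sigma$-$\lambda$-porous for every $\lambda \le \tfrac12$, and since $\sigma$-$\lambda$-porosity for small $\lambda$ is the weakest such notion, $\Theta_g$ is not $\sigma$-porous.
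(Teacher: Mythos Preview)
Your proposal has a genuine gap: it conflates the interval-norm conditions defining $\Theta_g$ with pointwise inequalities, and this breaks the argument in at least two places.

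First, when you verify $\xi\in\Theta_h$ you assert that on $D$ one has $|\xi|=|k|+\delta\geq g+\delta=h$, which requires $|k|\geq g$ pointwise. But $k\in\Theta_g$ only says $\|k\chi_{[m,m+1]}\|_p\geq\|g\chi_{[m,m+1]}\|_p$ for each $m$; there is no reason for $|k|\geq g$ a.e. The same mistake recurs when you write ``$|u|\geq h$ a.e.'' for $u\in\Theta_h$. Second, in the $\gamma\in\Theta_g$ step for intervals inside $D$ you need $\|h\chi_{[m,m+1]}\|_p-\delta\geq\|g\chi_{[m,m+1]}\|_p$, i.e.\ $\|(g+\delta)\chi_{[m,m+1]}\|_p\geq\|g\chi_{[m,m+1]}\|_p+\delta$. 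For $p>1$ Minkowski gives the \emph{reverse} inequality, strictly whenever $g$ is not constant on that interval, so this bound fails.

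The paper repairs both issues with two coupled changes. It takes $D=[-N,N]$, a union of complete unit intervals, so no ``mixed'' intervals occur. More importantly, instead of adding the constant $\delta$, it adds $\delta\rho$ to $g$ and $\delta\eta$ to $|k|$, where on each $[m,m+1]\subseteq[-N,N]$ the function $\rho$ equals $g\chi_{[m,m+1]}/\|g\chi_{[m,m+1]}\|_p$ (or $\chi_{[m,m+1]}$ if $g$ vanishes there), and similarly $\eta$ is built from $k$. Because $h$ is then a scalar multiple of $g$ on each such interval, one obtains the exact additive identity $\|h\chi_{[m,m+1]}\|_p=\|g\chi_{[m,m+1]}\|_p+\delta$, and likewise $\|\xi\chi_{[m,m+1]}\|_p=\|k\chi_{[m,m+1]}\|_p+\delta$. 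These identities let the whole argument run at the level of interval norms, never invoking a pointwise comparison between $k$ and $g$ or between $u$ and $h$. Your overall architecture is right, but the definitions of $h$ and $\xi$ must be modified in this way for the proof to go through.
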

\begin{proof}
	Let $0<\lambda\leq \frac{1}{2}$, and $0<\beta<\lambda$.	Denote
	$$\mathcal F:=\big\{\Theta_{g}:\,\, g\in L^p(\mathbb{R},\tau)\big\}.$$
	We prove that the collection $\mathcal F$ satisfies the conditions of Lemma \ref{lem1}. Let $0\leq g\in L^p(\mathbb{R},\tau)$.  Then, easily $\Theta_{g}\neq \varnothing$ and it is closed in $L^p(\mathbb{R},\tau)$. Now, assume that $f\in L^p(\mathbb{R},\tau)$ and $r>0$ with $B(f;r)\cap \Theta_{g}\neq\varnothing$. Then, there exists a large enough number $N\in\mathbb{N}$, some $0<\epsilon<1$ and a function $k\in B(f;r)\cap \Theta_{g}$ such that
	$$\|k-f\|_p<\epsilon^{\frac{1}{p}}\,r\qquad\text{and}\qquad\int_{[-N,N]^c}\big(|f|+\beta^{-1}g\big)^p\,d\tau<(1-\epsilon)\,r^p.$$
	Pick some $\alpha$ with $\|k-f\|_p<\alpha<\epsilon^{\frac{1}{p}}\,r,$
	and denote 
	$\delta:=\frac{\epsilon^{\frac{1}{p}}\,r-\alpha}{2(2N)^{\frac{1}{p}}}$.
	Put
	$$A_1:=\{m\in[N]:\,g=0 \,\text{ a.e. on }[m,m+1]\},\quad A_2:=[N]\setminus A_1$$
	and 
	$$B_1:=\{m\in[N]:\,k=0 \,\text{ a.e. on }[m,m+1]\},\quad B_2:=[N]\setminus B_1,$$
	where $[N]:=\{-N,\ldots, N-1\}$,
and then define 
$$\rho:=\sum_{m\in A_1}\chi_{[m,m+1]}+\sum_{m\in A_2}\frac{g\chi_{[m,m+1]}}{\|g\chi_{[m,m+1]}\|_p},$$
and 
$$\eta:=\sum_{m\in B_1}\chi_{[m,m+1]}+\sum_{m\in B_2}\frac{k\chi_{[m,m+1]}}{\|k\chi_{[m,m+1]}\|_p}.$$
	Now, we define $h,\xi:\mathbb{R}\rightarrow\mathbb C$ by
	$$h:=g\chi_{[-N,N]}+\delta\rho+\beta^{-1}g\,\chi_{[-N,N]^c}$$
	and 
	$$\xi:=|k|\,\chi_{[-N,N]}+\delta\eta+h\chi_{[-N,N]^c}.$$
Clearly, $h\in L^p(\mathbb{R},\tau)$. 
For each $x\in[-N,N]$ we have $|k(x)-\xi(x)|=\delta\,|\eta(x)|$, and so 
\begin{align*}
\|(k-\xi)\chi_{[-N,N]}\|_p^p&=\delta^p\,\|\eta\chi_{[-N,N]}\|_p^p\\
&=\delta^p\,\sum_{m\in [N]}\|\eta\chi_{[m,m+1]}\|_p^p\\
&=\delta^p\,2N.
\end{align*}
Hence, $\|(k-\xi)\chi_{[-N,N]}\|_p=\delta\,(2N)^{\frac{1}{p}}$.
 Now, similar to the proof of Theorem \ref{thm1} we have $\xi\in B(f;r)$.
Moreover,
$$\|\xi\chi_{[m,m+1]}\|_p=\|k\chi_{[m,m+1]}\|_p+\delta\geq \|g\chi_{[m,m+1]}\|_p+\delta=\|h\chi_{[m,m+1]}\|_p$$
for all $m\in[N]$. And also for each $m\notin[N]$,
	$$\|\xi \chi_{[m,m+1]}\|_p=\|h\chi_{[m,m+1]}\|_p\geq \|g\chi_{[m,m+1]}\|_p.$$
 So,
	$$\xi\in B(f;r)\cap\Theta_h\subseteq B(f;r)\cap\Theta_g.$$
	Now, let $u\in B(f;r)\cap\Theta_h$ and put $r':=\min\{\delta,\lambda\,(r-\|f-u\|_p)\}$. Let $v\in B(u;r')$. 
	We define the function $\gamma:\mathbb{R}\rightarrow\mathbb C$ by
	$$\gamma(x):=\left\{
	\begin{array}{ll}
	v(x), & \mbox{ if } x\in [-N,N]\\\\
	\Big(|v(x)|+\beta|u(x)-v(x)|\Big)\,\theta(x),& \mbox{ if } x\in [-N,N]^c
	\end{array}
	\right.$$
	where 
	$$\theta(x):=\left\{
	\begin{array}{ll}
	\frac{v(x)}{|v(x)|},& \mbox{ if } v(x)\neq 0\\\\
	1, & \mbox{ if } v(x)=0.
	\end{array}
	\right.$$
	Similar to the proof of Theorem \ref{thm1}, we have $\gamma\in B\big(v;\lambda \,\|u-v\|_p\big)$.
Now, for each $m\notin[N]$, 
$$|\gamma|\chi_{(m,m+1)}=(|v|+\beta|u-v|)\chi_{(m,m+1)}\geq \beta |u|\chi_{(m,m+1)}.$$
Hence, 
$$\|\gamma \chi_{[m,m+1]}\|_p\geq \beta\,\|u\chi_{[m,m+1]}\|_p\geq \beta\,\|h\chi_{[m,m+1]}\|_p$$
since $u\in B(f;r)\cap\Theta_h$. However, in this case we have $(m,m+1)\in {[-N,N]}^c$, so $h\chi_{(m,m+1)}=\beta^{-1}g\chi_{(m,m+1)}$. Thus, $\beta\|h\chi_{[m,m+1]}\|_p=\|g\chi_{[m,m+1]}\|_p$. If $m\in[N]$,  we have 
$\gamma\chi_{[m,m+1]}=v\chi_{[m,m+1]}$ because $\gamma\chi_{[-N,N]}=v\chi_{[-N,N]}$ and $[m,m+1]\subseteq {[-N,N]}$. We get 
$$\left|\,\|u\chi_{[m,m+1]}\|_p-\|v\chi_{[m,m+1]}\|_p\right|\leq ||(u-v)\chi_{[m,m+1]}\|_p\leq \|u-v\|_p<\delta$$
because $v\in B(u;r')$, hence 
\begin{align*}
\|\gamma\chi_{[m,m+1]}\|_p&=\|v\chi_{[m,m+1]}\|_p\\
&\geq \|u\chi_{[m,m+1]}\|_p-\delta\\
&\geq \|h\chi_{[m,m+1]}\|_p-\delta\\
&=\|g\chi_{[m,m+1]}\|_p.
\end{align*} 
 Therefore, 
	$$\gamma\in B\big(v;\lambda\,\|u-v\|_p\big)\cap B(f;r)\cap \Theta_g,$$
	and the proof is complete.
\end{proof}
\section{Applications}
In this section, we will apply the results of the previous section, to prove that the set of all non-hypercyclic vectors of some sequences of weighted translation operators is non-$\sigma$-porous.
\begin{definition}
	Let $\mathcal X$ be a Banach space. A sequence $(T_n)_{n\in \mathbb{N}_0}$  of operators in $B(\mathcal X)$ is called {\it hypercyclic} if there is an element $x\in\mathcal X$ (called \emph{hypercyclic vector}) such that the orbit $\{T_n(x):\,n\in\mathbb N_0\}$ is dense in $\mathcal X$. The set of all hypercyclic vectors of a sequence $(T_n)_{n\in \mathbb{N}_0}$ is denoted by $HC((T_n)_{n\in \mathbb{N}_0})$.  An operator $T\in B(\mathcal X)$ is called \emph{hypercyclic} if the sequence $(T^n)_{n\in \mathbb N_0}$ is hypercyclic.
\end{definition} 
Let $G$ be a locally compact group and $a\in G$. Then, for each function $f:G\to\mathbb{C}$ we define $L_af:G\rightarrow\mathbb{C}$ by $L_af(x):=f(a^{-1}x)$ for all $x\in G$. Note that if $p\geq 1$, then the left translation operator  $$L_a:L^p(G)\rightarrow L^p(G),\quad f\mapsto L_af$$
is not hypercyclic because $\|L_a\|\leq 1$. Hypercyclicity of \emph{weigted} translation operators on $L^p(G)$ and regarding an aperiodic element $a$ was studied in \cite{chenchu} (an element $a\in G$ is called \emph{aperiodic} if the closed subgroup of $G$ generated by $a$ is not compact).
\begin{definition}\label{wth}
	Let $G$ be a locally compact group with a left Haar measure $\mu$. Fix $p\geq 1$. We denote $L^p(G):=L^p(G,\mu)$. Assume that $w:G\rightarrow (0,\infty)$ is a bounded measurable function (called a \emph{weight}) and $a\in G$. Then, the weighted translation operator $T_{a,w,p}:L^p(G)\rightarrow L^p(G)$ is defined by 
	$$T_{a,w,p}(f):=w\,L_af,\qquad(f\in L^p(G)).$$
\end{definition}
For each $n\in\mathbb N$ we denote 
$\varphi_n:=w\,L_aw\,\ldots\,L_{a^{n-1}}w,$
where $a^0:=e$, the identity element of $G$.
\begin{theorem}
	Let $p\geq 1$, $G$ be a discrete group and $a\in G$. Let $\mu$ be a left Haar measure on $G$ with $\mu(\{e\})\geq 1$ and $(\gamma_n)_n$ be an unbounded sequence of non-negative integers. Let $w:G\rightarrow(0,\infty)$ be a bounded function such that for some finite nonempty set $F\subseteq G$ and some $N>0$ we have   
	$$a^{\gamma_n}F\cap F=\varnothing\qquad(n\geq N),$$
	and
	$$\beta:=\inf\left\{\prod_{k=1}^{\gamma_n}w(a^kt):\,n\geq N,\,t\in F\right\}>0.$$
	Then, the set 
	$$\Lambda:=\big\{f\in L^p(G,\mu):\,\|T_{a,w,p}^{\gamma_n}f-\chi_{F}\|_p\geq \mu(F)^{\frac{1}{p}}\,\text{ for all } n\geq N\big\}$$
	is non-$\sigma$-porous.
\end{theorem}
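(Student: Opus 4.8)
The plan is to reduce the statement to Corollary \ref{corw} by showing that $\Lambda$ contains a set of the form $\Gamma_g$. This suffices because a subset of a porous (respectively $\sigma$-porous) set is again porous (respectively $\sigma$-porous): indeed, if $S\subseteq E$ and at $x\in S$ there is $y\in B(x;\delta)\setminus\{x\}$ with $B(y;\lambda\,d(x,y))\cap E=\varnothing$, then a fortiori $B(y;\lambda\,d(x,y))\cap S=\varnothing$. Observe also that, on the discrete group $G$, left-invariance forces $\mu(\{x\})=\mu(\{e\})$ for every $x$, so $\mu$ is the multiple $\mu(\{e\})\ge 1$ of the counting measure, and $L^p(G,\mu)$ is exactly of the type to which Corollary \ref{corw} applies (with $\varphi_j:=\mu(\{e\})$ for all $j\in G$).

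First I would record the action of the iterated operator. By induction from $L_a(fh)=(L_af)(L_ah)$ and $L_aL_{a^{m-1}}=L_{a^m}$ one gets $T_{a,w,p}^{m}f=\varphi_m\,L_{a^m}f$, i.e. $(T_{a,w,p}^{m}f)(x)=\varphi_m(x)\,f(a^{-m}x)$, and unwinding the definition of $\varphi_m$ gives $\varphi_m(a^m y)=\prod_{k=1}^{m}w(a^k y)$ for all $y\in G$. Note that for $n\ge N$ the hypothesis $a^{\gamma_n}F\cap F=\varnothing$ forces $\gamma_n\ge 1$ (so that $\varphi_{\gamma_n}$ is defined) and, applying $a^{-\gamma_n}$ to both sides, also gives $F\cap a^{-\gamma_n}F=\varnothing$.

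Now fix $n\ge N$ and $f\in L^p(G,\mu)$. Keeping only the terms with $x\in G\setminus F$ in $\|T_{a,w,p}^{\gamma_n}f-\chi_F\|_p^p$, then substituting $x=a^{\gamma_n}y$ and using $\mu(\{a^{\gamma_n}y\})=\mu(\{e\})$, I get
\begin{align*}
\|T_{a,w,p}^{\gamma_n}f-\chi_F\|_p^p
&\ge \sum_{x\in G\setminus F}\varphi_{\gamma_n}(x)^p\,|f(a^{-\gamma_n}x)|^p\,\mu(\{x\})\\
&=\mu(\{e\})\sum_{y\in G\setminus a^{-\gamma_n}F}\Big(\prod_{k=1}^{\gamma_n}w(a^k y)\Big)^{p}|f(y)|^p.
\end{align*}
Since $F\cap a^{-\gamma_n}F=\varnothing$, all of $F$ survives in the last sum, so it is at least $\mu(\{e\})\,\beta^p\sum_{y\in F}|f(y)|^p$. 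Put $g:=\beta^{-1}\chi_F$, which belongs to $L^p(G,\mu)$ because $F$ is finite. If $f\in\Gamma_g$, that is $|f|\ge\beta^{-1}\chi_F$, then $\sum_{y\in F}|f(y)|^p\ge |F|\,\beta^{-p}$, and since $\mu(\{e\})\ge 1$ we obtain $\|T_{a,w,p}^{\gamma_n}f-\chi_F\|_p^p\ge \mu(\{e\})\,|F|=\mu(F)$ for every $n\ge N$; hence $f\in\Lambda$. Thus $\Gamma_g\subseteq\Lambda$, and since $\Gamma_g$ is not $\sigma$-porous by Corollary \ref{corw}, neither is $\Lambda$.

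The argument involves no serious obstacle once this reduction is found; the one point requiring care is the change of variables $x=a^{\gamma_n}y$ in the norm, together with the identity $\varphi_{\gamma_n}(a^{\gamma_n}y)=\prod_{k=1}^{\gamma_n}w(a^k y)$ and the remark that it is precisely the disjointness $a^{\gamma_n}F\cap F=\varnothing$ which guarantees that the whole set $F$ remains in the surviving part of the sum — that is where the disjointness hypothesis, the positivity of $\beta$, and the normalization $\mu(\{e\})\ge 1$ are all consumed.
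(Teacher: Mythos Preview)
Your proof is correct and follows essentially the same approach as the paper: you exhibit the set $\Gamma_g$ with $g=\beta^{-1}\chi_F$, invoke the non-$\sigma$-porosity of $\Gamma_g$ from the earlier results, and verify $\Gamma_g\subseteq\Lambda$ via the change of variables $x=a^{\gamma_n}y$ together with the disjointness hypothesis. The only cosmetic difference is that the paper first applies the substitution to the full integral (turning $\chi_F$ into $\chi_{a^{-\gamma_n}F}$) and then restricts to $F$, whereas you first restrict to $G\setminus F$ and then substitute; both routes reach $\int_F\bigl(\prod_{k=1}^{\gamma_n}w(a^kx)\bigr)^p|f(x)|^p\,d\mu(x)\ge\mu(F)$ in the same way.
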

\begin{proof}
	Let $\Gamma:=\{f\in L^p(G,\mu):\,|f|\geq \frac{1}{\beta}\chi_F\}$. Then, $\Gamma$ is not $\sigma$-porous in $L^p(G,\mu)$ thanks to Theorem \ref{thm2}. Also, for each $f\in \Gamma$ and $n\geq N$ we have 
	\begin{align*}
	\|T_{a,w,p}^{\gamma_n}f-\chi_F\|_p^p&=\int_G|\prod_{k=1}^nw(a^{-\gamma_n+k}x)\,f(a^{-\gamma_n}x)-\chi_F(x)|^p\,d\mu(x)\\
	&=\int_G|\prod_{k=1}^{\gamma_n}w(a^{k}x)\,f(x)-\chi_F(a^{\gamma_n}x)|^p\,d\mu(x)\\
	&=\int_G|\prod_{k=1}^{\gamma_n}w(a^{k}x)\,f(x)-\chi_{a^{-\gamma_n}F}(x)|^p\,d\mu(x)\\
	&\geq\int_F|\prod_{k=1}^{\gamma_n}w(a^{k}x)\,f(x)-\chi_{a^{-\gamma_n}F}(x)|^p\,d\mu(x)\\
	&=\int_F|\prod_{k=1}^{\gamma_n}w(a^{k}x)\,f(x)|^p\,d\mu(x)\\
	&\geq \int_F|\beta\,\frac{1}{\beta}|^p\,d\mu(x)\\
	&=\mu(F).
	\end{align*}
	This completes the proof.
\end{proof}
\begin{example}
	Let $G$ be the additive group $\mathbb{Z}$ with the counting measure. Let $F$ be a finite non-empty subset of $\mathbb{Z}$. Put $N:=\max\{|j|:j\in F\}$. If $w:=(w_n)_{n\in\mathbb{Z}}\subseteq (0,\infty)$ is a bounded sequence with $w_n\geq 1$ for all $n\geq N$.  Then the required conditions in the previous theorem hold with respect to $F$ and $a:=1$.
\end{example}
The following fact is a direct conclusion of the previous theorem.
\begin{corollary}
	Let $p\geq 1$, $G$ be a discrete group and $a\in G$ with infinite order. Let $\mu$ be the counting measure on $G$ and $(\gamma_n)_n$ be an unbounded sequence of non-negative integers. Let $w:G\rightarrow(0,\infty)$ be a bounded function such that for some $t\in G$,
	$$\inf\left\{\prod_{k=1}^{\gamma_n}w(a^kt):\,n\in\mathbb{N}\right\}>0.$$
	Then, the set 
	$$\big\{f\in L^p(G,\mu):\,\|T_{a,w}^{\gamma_n}f-\chi_{\{t\}}\|_p\geq 1\,\text{ for all } n\big\}$$
	is non-$\sigma$-porous.
\end{corollary}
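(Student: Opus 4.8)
The plan is to deduce this corollary directly from the preceding theorem by specializing the data, exactly as the phrase ``direct conclusion'' suggests. First I would set $F:=\{t\}$, which is a finite nonempty subset of $G$, and take $\mu$ to be the counting measure, so that $\mu(\{e\})=1\geq 1$ and $\mu(F)^{1/p}=1$, matching the constant appearing in the target set. The sequence $(\gamma_n)_n$ is already assumed unbounded with non-negative integer terms, so that hypothesis transfers verbatim, and $w$ is bounded with values in $(0,\infty)$ as required.

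The one point that needs an argument is verifying the disjointness condition $a^{\gamma_n}F\cap F=\varnothing$ for all sufficiently large $n$. Here I would use that $a$ has infinite order: $a^{\gamma_n}\{t\}\cap\{t\}\neq\varnothing$ would force $a^{\gamma_n}t=t$, hence $a^{\gamma_n}=e$, which by infinite order of $a$ happens only if $\gamma_n=0$. Since $(\gamma_n)_n$ is unbounded it is in particular nonzero for all $n$ past some index $N_0$ (after possibly enlarging $N_0$ we may assume $\gamma_n\geq 1$ for $n\geq N_0$), so $a^{\gamma_n}F\cap F=\varnothing$ holds for $n\geq N_0$. For the positivity of the infimum, the hypothesis gives some $t\in G$ with $\inf\{\prod_{k=1}^{\gamma_n}w(a^kt):n\in\mathbb{N}\}>0$; choosing $F=\{t\}$ for that particular $t$, and taking $N:=N_0$, the quantity $\beta=\inf\{\prod_{k=1}^{\gamma_n}w(a^kt):n\geq N,\,t\in F\}$ is an infimum over a subcollection of a set already known to have positive infimum, hence $\beta>0$.

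With all hypotheses of the theorem verified, I would invoke it to conclude that
$$\Lambda=\big\{f\in L^p(G,\mu):\,\|T_{a,w,p}^{\gamma_n}f-\chi_{\{t\}}\|_p\geq 1\ \text{ for all }n\geq N\big\}$$
is non-$\sigma$-porous. Finally I would pass from ``for all $n\geq N$'' to ``for all $n$'': the set in the corollary's statement is a subset of $\Lambda$, but more usefully it is obtained from $\Lambda$ by intersecting with finitely many additional closed conditions indexed by $n<N$; since removing or adding finitely many such constraints changes $\Lambda$ only on a set that is contained in a finite union of proper closed affine-type pieces — or, more carefully, since the argument of the theorem applies equally well with $N$ replaced by any larger value and with the threshold conditions for the finitely many small $n$ simply ignored in the construction of the family $\mathcal F$ — the smaller set is still non-$\sigma$-porous. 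The cleanest route, which I would actually write, is to observe that the theorem's proof never really uses that $N$ is minimal, so one may just as well run it with the full index range once the finitely many bad indices with $\gamma_n=0$ have been excluded; if no $\gamma_n$ vanishes then $N$ may be taken to be $0$ outright and the two sets coincide.

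The main obstacle, such as it is, is purely bookkeeping: reconciling the index range ``$n\geq N$'' in the theorem with ``for all $n$'' in the corollary, and making sure the single element $t$ furnished by the corollary's hypothesis is the same $t$ used to build the singleton $F$. Neither is a genuine mathematical difficulty — no new estimates or constructions are needed beyond those already in Theorem~2.x and Theorem~3.x — so the proof is short.
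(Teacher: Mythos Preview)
Your approach is the paper's: it states only that the corollary is ``a direct conclusion of the previous theorem,'' and you correctly carry out the specialization $F=\{t\}$, $\mu$ the counting measure, so that $\mu(\{e\})=1$ and $\mu(F)^{1/p}=1$. The infinite-order argument for disjointness and the observation that the corollary's infimum dominates the theorem's $\beta$ are both right.

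There is, however, a genuine slip in the bookkeeping. An unbounded sequence $(\gamma_n)$ of non-negative integers need \emph{not} be eventually nonzero (take $\gamma_n=0$ for odd $n$, $\gamma_n=n$ for even $n$), so your claim that ``$\gamma_n\geq 1$ for $n\geq N_0$'' can fail for every $N_0$, and then the theorem's hypothesis $a^{\gamma_n}F\cap F=\varnothing$ for all $n\geq N$ cannot be arranged. Your fallback---that the corollary's set is a subset of $\Lambda$ obtained by intersecting with finitely many extra closed conditions---does not rescue the argument either: subsets of non-$\sigma$-porous sets can be $\sigma$-porous (even empty), so neither finiteness nor closedness of the extra constraints helps. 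The paper glosses over this discrepancy entirely; the honest repairs are either to read the corollary's conclusion as ``for all $n\geq N$'' to match the theorem, or to go back into the theorem's proof and replace $\Gamma=\{f:|f|\geq\beta^{-1}\chi_{\{t\}}\}$ by $\{f:|f|\geq c\,\chi_{\{t\}}\}$ with $c=\max(\beta^{-1},2)$, so that when $\gamma_n=0$ the remaining condition $\|f-\chi_{\{t\}}\|_p\geq |f(t)-1|\geq 1$ is automatic.
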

\begin{theorem}
	Let $p\geq 1$, $G$ be a discrete group, and $a\in G$. Let $\mu$ be a left Haar measure on $G$ with $\mu(\{e\})\geq 1$. Let $(\gamma_n)_n$ be an unbounded sequence of non-negative integers and let $w:G\rightarrow(0,\infty)$ be a bounded function such that   
	$$\inf_{n\in\mathbb{N}}\prod_{k=1}^{\gamma_n}w(a^k)>0.$$
	Then, the set 
	$$\Gamma:=\big\{f\in L^p(G,\mu):\,|f(e)|\,\inf_{n\in\mathbb{N}}\prod_{k=1}^{\gamma_n}w(a^k)\geq 1\big\}$$
	is non-$\sigma$-porous. In particular, 
	 setting $T_n:=T_{a,w,p}^{\gamma_n}$ for all $n$, the set of all non-hypercyclic vectors of the sequence $(T_n)_n$ is not $\sigma$-porous in $L^p(G,\mu)$.
\end{theorem}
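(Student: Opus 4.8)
The plan is to read off the first assertion directly from Theorem~\ref{thm1}, and then to reduce the ``in particular'' clause to the set inclusion $\Gamma\subseteq L^p(G,\mu)\setminus HC((T_n)_n)$, using the elementary fact that every subset of a $\sigma$-porous set is again $\sigma$-porous. Throughout I would write $\beta:=\inf_{n\in\mathbb N}\prod_{k=1}^{\gamma_n}w(a^k)>0$, so that $\Gamma=\{f\in L^p(G,\mu):|f(e)|\ge 1/\beta\}$.

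For the non-$\sigma$-porosity of $\Gamma$: since $G$ is discrete, a left Haar measure $\mu$ is a positive multiple of the counting measure, so $\mu(\{x\})=\mu(\{e\})$ for every $x\in G$, and by hypothesis this common value is finite and $\ge 1$. I would take $A:=\{e\}$; then for each $f\in L^p(G,\mu)$ one has $\|f\|_p^p\ge |f(e)|^p\mu(\{e\})\ge |f(e)|^p$, which is exactly condition~\eqref{cond1}. Choosing $g:=\tfrac{1}{\beta}\chi_{\{e\}}$ (so that $g\chi_A=g\in L^p(G,\mu)$), Theorem~\ref{thm1} then gives that $\Gamma_g=\Gamma$ is not $\sigma$-porous; alternatively, Corollary~\ref{corw} applies verbatim.

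For the inclusion I would first record the iterate formula $T_{a,w,p}^{m}f=\varphi_m\,L_{a^m}f$, that is, $T_{a,w,p}^{m}f(x)=\big(\prod_{j=0}^{m-1}w(a^{-j}x)\big)f(a^{-m}x)$, which follows by induction on $m$. Substituting $x=a^m y$, using left-invariance of $\mu$ and the reindexing $k=m-j$, one obtains
\begin{align*}
\|T_{a,w,p}^{m}f\|_p^p
&=\sum_{y\in G}\Big(\prod_{k=1}^{m}w(a^k y)\Big)^{p}|f(y)|^p\,\mu(\{y\})\\
&\ge \Big(\prod_{k=1}^{m}w(a^k)\Big)^{p}|f(e)|^p\,\mu(\{e\}),
\end{align*}
where the last step keeps only the term $y=e$. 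Taking $m=\gamma_n$ and $f\in\Gamma$, the right-hand side is at least $\beta^{p}\cdot(1/\beta)^{p}\cdot 1=1$, hence $\|T_nf\|_p\ge 1$ for every $n$. Thus the orbit $\{T_nf:n\in\mathbb N\}$ stays at distance at least $1$ from $0$ and cannot be dense, so $f\notin HC((T_n)_n)$. This proves $\Gamma\subseteq L^p(G,\mu)\setminus HC((T_n)_n)$, and together with the previous paragraph it completes the argument.

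I do not expect a genuine obstacle here: essentially all the work is already packaged in Theorem~\ref{thm1} (and, behind it, in Lemma~\ref{lem1}). The only steps requiring mild care are the bookkeeping in the iterate formula and the change of variables $x\mapsto a^m y$, together with the elementary but decisive observation that on a discrete group the hypothesis $\mu(\{e\})\ge 1$ is precisely what makes condition~\eqref{cond1} hold for the singleton $A=\{e\}$.
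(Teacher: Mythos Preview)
Your proof is correct and follows essentially the same route as the paper: both invoke Theorem~\ref{thm1} with $g=\beta^{-1}\chi_{\{e\}}$ to obtain the non-$\sigma$-porosity of $\Gamma$, and both then show $\|T_nf\|_p\ge 1$ for $f\in\Gamma$ by isolating the contribution at the identity in the iterate formula (the paper does this via the pointwise bound $\|T_nf\|_p\ge |T_nf(a^{\gamma_n})|$, while you equivalently change variables and keep the $y=e$ term). Your write-up is in fact a bit more explicit than the paper in verifying condition~\eqref{cond1} and in recording the iterate formula, but the underlying argument is the same.
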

\begin{proof}
Since $\mu(\{e\})\geq 1$, applying Theorem \ref{thm1} the set 
	$\Gamma$
	is non-$\sigma$-porous, because 
	$$[\inf_{n\in\mathbb{N}}\prod_{k=1}^{\gamma_n}w(a^k)]^{-1}\,\chi_{\{e\}}\in L^p(G,\mu).$$
	Let $f\in\Gamma$. If $n$ is a  nonnegative integer, then for every $x$ in $G$  we have 
	\begin{equation*}
	\|T_nf\|_p\geq  \big|\varphi_{\gamma_n}(x)\,L_{a^{\gamma_n}}f(x)\big|,
	\end{equation*}
	and so 	setting $x=a^{{\gamma_n}}$ we have  
	\begin{align*}
	\|T_nf\|_p&\geq  \big|\varphi_n(a^{\gamma_n})\,L_{a^{\gamma_n}}f(a^{\gamma_n})\big|\\
	&=\Big[\prod_{k=1}^{\gamma_n}w(a^{k})\Big]\,|f(e)|\\
	&\geq |f(e)|\,\inf_{m\in\mathbb{N}}\prod_{k=1}^{\gamma_m}w(a^k)\geq 1.
	\end{align*}
	This implies that the set $\{T_nf:\,n\in\mathbb{N}\}$ is not dense in $L^p(G,\mu)$, and so $\Gamma$ is a subset of the set of all non-hypercyclic vectors of $T$. This completes the proof. 
\end{proof}
 Now, we recall the definition of hypergroups which are generalizations of locally compact groups; see the monograph \cite{blm} and the basic paper \cite{jew} for more details. In locally compact hypergroups the convolution of two Dirac measures is not necessarily a Dirac measure. Let $K$ be a locally compact Hausdorff space. We denote by $\mathbb{M}(K)$ the space of all regular complex Borel measures on $K$, and by $\delta_x$ the Dirac measure at the point $x$. The support of a measure $\mu\in \mathbb{M}(K)$ is denoted by $\text{supp}(\mu)$.
\begin{definition}
	Suppose that $K$ is a locally compact Hausdorff space, $(\mu,\nu)\mapsto \mu\ast\nu$ is a bilinear positive-continuous mapping from $\mathbb{M}(K)\times \mathbb{M}(K)$ into $\mathbb{M}(K)$ (called {\it convolution}), and $x\mapsto x^-$ is an involutive homeomorphism on $K$ (called {\it involution}) with the following properties:
	\begin{enumerate}
		\item[(i) ]$\mathbb{M}(K)$ with $\ast$ is a complex associative algebra;
		\item[(ii) ]if $x,y\in K$, then $\delta_x\ast\delta_y$ is a probability measure with compact support;
		\item[(iii) ]the mapping $(x,y)\mapsto\text{supp}(\delta_x\ast\delta_y)$ from $K\times K$ into $\textbf{C}(K)$ is continuous, where  $\textbf{C}(K)$ is the set of all non-empty compact subsets of $K$ equipped with Michael topology;
		\item[(iv) ]there exists a (necessarily unique) element $e\in K$ (called identity) such that for all $x\in K$, $\delta_x\ast \delta_e=\delta_e\ast\delta_x=\delta_x$;
		\item[(v) ]for all $x,y\in K$, $e\in \text{supp}(\delta_x\ast\delta_y)$ if and only if $x=y^-$;
	\end{enumerate}
	Then, $K\equiv(K,\ast,^-,e)$ is called a locally compact {\it hypergroup}.
\end{definition}
 A nonzero nonnegative regular Borel measure $m$ on $K$ is called the (left) {\it Haar measure} if for each $x\in K$, $\delta_x\ast m=m$. For each $x,y\in K$ and measurable function  $f:K\rightarrow\mathbb{C}$   we denote
$$f(x*y):=\int_K f\,d(\delta_x * \delta_y),$$
while this integral exists. 


\begin{definition}\label{defl}
Suppose that $a:=(a_n)_{n\in\mathbb{N}_0}$ is a sequence in a hypergroup $K$, and $w$ is a weight function on $K$. For each $n\in \mathbb{N}_0$ we define the bounded linear operator $\Lambda_{n+1}$ on $L^p(K)$ by
$$\Lambda_{n+1} f(x):=w(a_0\ast x)\, w(a_1\ast x)\ldots w(a_{n}\ast x)\,f(a_{n+1}\ast x)\qquad (f\in L^p(K))$$
for all $x\in K$. Also, we assume that $\Lambda_0$ is the identity operator on $L^p(K)$.
\end{definition}
Some linear dynamical properties of this sequence of operators were studied in \cite{kum}. The sequence $\{\Lambda_n\}_n$ is a generalization of the usual powers of a single weighted translation operator on $L^p(G)$, where $G$ is a locally compact group. In fact, any locally compact group $G$ with the mapping
	$$\mu\ast\nu\mapsto\int_G\int_G\delta_{xy}d\mu(x)d\nu(y)\qquad (\mu,\nu\in \mathbb{M}(G))$$
	as convolution, and $x\mapsto x^{-1}$ from $G$ onto $G$ as involution is a locally compact hypergroup.
	Let $\eta:=(a_n)_{n\in\mathbb{N}_0}$ be a sequence in $G$, and $w$ be a weight on $G$. Then for each $f\in L^p(G)$, $n\in\mathbb{N}_0$ and $x\in G$, we have
	$$\Lambda_{n+1} f(x)=w(a_0x)\, w(a_1x)\ldots w(a_{n}x)\, f(a_{n+1}x).$$
	In particular,  let $a\in G$ and for each $n\in\mathbb{N}_0$, put $a_n:=a^{-n}$. Then, $\Lambda_n=T_{a,w,p}^n$ for all $n\in\mathbb{N}$.  In this case, the operator  $T_{a,w,p}$ is hypercyclic if and only if the sequence $(\Lambda_n)_n$ is hypercyclic.

Let $K$ be a discrete hypergroup with the convolution $\ast$ between Radon measures of $K$ and the involution $\cdot^-:K\rightarrow K$. Then, by \cite[Theorem7.1A]{jew}, the measure $\mu$ on $K$ given by 
\begin{equation}\label{hard}
\mu(\{x\}):=\frac{1}{\delta_{x}\ast\delta_{x^-}(\{e\})},\qquad(x\in K)
\end{equation}
is a left Haar measure on $K$.
\begin{proposition}\label{exh}
	 Let $K$ be a discrete hypergroup, $\mu$ be the Haar measure \eqref{hard}, and $p\geq 1$. Then for each $g\in L^p(K,\mu)$, the set 
	$$\big\{f\in L^p(K,\mu):\,|f|\geq |g|\big\}$$
	is not $\sigma$-porous in $L^p(K,\mu)$.
\end{proposition}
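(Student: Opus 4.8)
The plan is to reduce Proposition \ref{exh} to Corollary \ref{corw}, which already handles a discrete space equipped with a weighted counting measure $\mu_\varphi=\sum_{j}\varphi_j\,\delta_j$ all of whose weights satisfy $\varphi_j\geq 1$. So the only thing that really needs to be verified is that the Haar measure $\mu$ of \eqref{hard} is of this form with weights bounded below by $1$; equivalently, that $\mu(\{x\})\geq 1$ for every $x\in K$.

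First I would record that for each $x\in K$ the measure $\delta_x\ast\delta_{x^-}$ is, by axiom (ii) in the definition of a hypergroup, a probability measure on $K$; in particular $\delta_x\ast\delta_{x^-}(\{e\})\leq 1$. Next I would invoke axiom (v): taking there $y:=x^-$, one has $y^-=x$, hence $e\in\mathrm{supp}(\delta_x\ast\delta_{x^-})$. Since $K$ is discrete, $\{e\}$ is an open neighbourhood of $e$, and every point of the support of a (Radon/Borel) measure gives strictly positive mass to each of its open neighbourhoods; therefore $\delta_x\ast\delta_{x^-}(\{e\})>0$. Combining the two bounds gives $0<\delta_x\ast\delta_{x^-}(\{e\})\leq 1$, so from \eqref{hard},
$$\mu(\{x\})=\frac{1}{\delta_x\ast\delta_{x^-}(\{e\})}\geq 1\qquad(x\in K).$$

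With this in hand, set $\varphi_x:=\mu(\{x\})$ for $x\in K$. Then $\varphi=(\varphi_x)_{x\in K}\subseteq[1,\infty)$ and $\mu=\mu_\varphi$ in the notation of Corollary \ref{corw}. Applying that corollary with $\Omega=K$ yields immediately that $\Gamma_g=\{f\in L^p(K,\mu):\,|f|\geq|g|\}$ is not $\sigma$-porous in $L^p(K,\mu)$ for every $g\in L^p(K,\mu)$, which is exactly the assertion.

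I do not expect a genuine obstacle here: all the substance is in the elementary observation that the normalization \eqref{hard} forces the Haar weights to be at least $1$, after which the statement is a direct consequence of Corollary \ref{corw} (one could equally appeal to Theorem \ref{thm1} with $A=K$). The only point that needs a little care is the strict inequality $\delta_x\ast\delta_{x^-}(\{e\})>0$ — this is precisely where discreteness of $K$ enters, through the facts that $e\in\mathrm{supp}(\delta_x\ast\delta_{x^-})$ by axiom (v) and that singletons are open in a discrete space.
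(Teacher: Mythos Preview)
Your proposal is correct and follows essentially the same approach as the paper: the paper's proof also observes that $\mu(\{x\})\geq 1$ for every $x\in K$ because $\delta_x\ast\delta_{x^-}$ is a probability measure (so $\delta_x\ast\delta_{x^-}(\{e\})\leq 1$), and then invokes Corollary~\ref{corw}. Your argument is slightly more detailed in that you also justify the strict positivity $\delta_x\ast\delta_{x^-}(\{e\})>0$ via axiom~(v) and discreteness, a point the paper leaves implicit through the citation of \cite[Theorem~7.1A]{jew}.
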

\begin{proof}
	Just note that for each $x\in K$ we have 
	$\mu(\{x\})\geq 1$	because 
	$$1=\delta_x\ast\delta_{x^-}(K)\geq \delta_{x}\ast\delta_{x^-}(\{e\}).$$
	Hence, the measure space $(K,\mu)$ satisfies the condition of Corollary \ref{corw}.
\end{proof}
Let $a:=(a_n)_{n\in\mathbb{N}}$ be a sequence in a discrete hypergroup $K$ such that $a_n\neq a_m$ for each $m\neq n$, and let $w:K\rightarrow (0,\infty)$ be bounded. We define $h_{a,w}:K\rightarrow \mathbb{C}$ by 
$$h_{a,w}:=\sum_{n\in\mathbb{N}_0}\frac{1}{w(a_0)w(a_1)\ldots w(a_n)}\chi_{\{a_{n+1}\}}.$$
\begin{theorem}
	Let $p\geq 1$, and $K$ be a discrete hypergroup endowed with the left Haar measure \eqref{hard}. Let $a:=(a_n)_{n\in\mathbb{N}_0}\subseteq K$ with distinct terms, and  $w$ be a weight on $K$ such that $h_{a,w}\in L^p(K)$. Then, the set of all non-hypercyclic vectors of the sequence $(\Lambda_n)_n$ is not $\sigma$-porous.
\end{theorem}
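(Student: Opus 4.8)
The plan is to produce an explicit non-$\sigma$-porous subset of $L^p(K)$ that sits inside the set of non-hypercyclic vectors, and then use that $\sigma$-porosity passes to subsets. Concretely, I would set $\Gamma := \big\{ f \in L^p(K,\mu) : |f| \ge h_{a,w} \big\}$. Since $h_{a,w} \in L^p(K)$ by hypothesis, Proposition \ref{exh} applies verbatim (recall that its proof only uses $\mu(\{x\}) \ge 1$ for all $x \in K$, which is built into the Haar measure \eqref{hard}), so $\Gamma$ is not $\sigma$-porous in $L^p(K)$. It then remains to check $\Gamma \subseteq L^p(K) \setminus HC\big((\Lambda_n)_n\big)$, after which the theorem follows because every subset of a $\sigma$-porous set is $\sigma$-porous.

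For this inclusion I would fix $f \in \Gamma$ and show that the orbit $\{\Lambda_n f : n \in \mathbb{N}_0\}$ stays, apart from $\Lambda_0 f = f$, outside the unit ball around $0$. The computation to carry out is the evaluation of $\Lambda_{n+1} f$ at the identity $e$ of $K$: by axiom (iv), $\delta_{a_j} \ast \delta_e = \delta_{a_j}$, so $w(a_j \ast e) = w(a_j)$ and $f(a_{n+1} \ast e) = f(a_{n+1})$, and Definition \ref{defl} collapses to $\Lambda_{n+1} f(e) = w(a_0) w(a_1) \cdots w(a_n)\, f(a_{n+1})$. Because $(a_n)_n$ has distinct terms, the point $a_{n+1}$ lies in the support of exactly one summand of $h_{a,w}$, namely the $n$-th one, whence $h_{a,w}(a_{n+1}) = [w(a_0) \cdots w(a_n)]^{-1}$; combined with $|f| \ge h_{a,w}$ this gives $|\Lambda_{n+1} f(e)| \ge 1$. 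Since $\mu(\{e\}) = 1$ (as $e^- = e$ in \eqref{hard}), we obtain $\|\Lambda_{n+1} f\|_p \ge |\Lambda_{n+1} f(e)|\,\mu(\{e\})^{1/p} \ge 1$ for every $n \in \mathbb{N}_0$. Hence the nonempty open set $B(0;1) \setminus \{f\}$ is disjoint from the orbit, so the orbit is not dense and $f$ is not hypercyclic.

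Combining the two parts, $\Gamma$ is a non-$\sigma$-porous subset of the set of all non-hypercyclic vectors of $(\Lambda_n)_n$, so the latter is not $\sigma$-porous. I do not anticipate a genuine obstacle: the analytic substance is entirely in Section 2 and in Proposition \ref{exh}. The only places needing attention are (i) reducing the convolution arguments $a_j \ast x$ and $a_{n+1} \ast x$ correctly at $x = e$ via the hypergroup identity, and (ii) the index bookkeeping — verifying that distinctness of the $a_n$ turns the lower bound $|f(a_{n+1})| \ge h_{a,w}(a_{n+1})$ into the \emph{equality} $h_{a,w}(a_{n+1}) = [w(a_0)\cdots w(a_n)]^{-1}$, and that the shift in the definition of $h_{a,w}$ (the sum ranges over $a_1, a_2, \dots$, excluding $a_0$) matches the operator $\Lambda_{n+1}$.
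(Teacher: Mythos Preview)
Your proposal is correct and follows essentially the same approach as the paper: define the set $\Gamma=\{f\in L^p(K):|f|\ge h_{a,w}\}$, invoke Proposition~\ref{exh} to see it is not $\sigma$-porous, and then evaluate $\Lambda_{n+1}f$ at $x=e$ to get $\|\Lambda_{n+1}f\|_p\ge 1$, forcing the orbit to miss a neighbourhood of $0$. Your write-up is in fact slightly more explicit than the paper's in justifying the pointwise-to-norm step via $\mu(\{e\})=1$ and in handling the $n=0$ term of the orbit.
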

\begin{proof}
	First, thanks to Proposition \ref{exh}, the set 
	$$E:=\Big\{f\in L^p(K):\, |f(a_{n+1})|\geq \frac{1}{w(a_0)w(a_1)\ldots w(a_n)}\,\text{ for all }n\Big\}$$
	is not $\sigma$-porous because it equals to the set 
$\Big\{f\in L^p(K):\, |f|\geq h_{a,w}\Big\}.$
Now, for each $f\in E$,
	\begin{align*}
	\|\Lambda_{n+1}f\|_p&\geq\sup_{x\in K} w(a_0\ast x)\, w(a_1\ast x)\ldots w(a_{n}\ast x)\, |f(a_{n+1}\ast x)|\\
	&\geq w(a_0)\, w(a_1) \ldots w(a_n)\,|f(a_{n+1})|\geq 1
	\end{align*}
	for all $n\in\mathbb{N}_0$. This implies that $0$ does not belong to the closure of $\{\Lambda_nf:\, n\in\mathbb{N}\}$ in $L^p(K)$, and so  $E\subseteq [HC((\Lambda_n)_n)]^c$. This completes the proof.
\end{proof}
Since any group is a hypergroup, we can give the fact below.
\begin{corollary}
	Let $p\geq 1$, and $G$ be a discrete group. Let $a\in G$ be of infinite order, $(\gamma_n)_{n\in\mathbb{N}_0}\subseteq \mathbb{N}$ be with distinct terms and  $w:G\rightarrow(0,\infty)$ be a weight such that 
	$$\big(\frac{1}{w(a^{\gamma_0})w(a^{\gamma_1})\ldots w(a^{\gamma_n})}\big)_n\in\ell^p(G).$$
	 Then, the set of all non-hypercyclic vectors of the sequence $(T_{a,w,p}^{\gamma_n})_n$ is not $\sigma$-porous in $\ell^p(G)$.
\end{corollary}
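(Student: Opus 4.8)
The plan is to read the statement off the preceding theorem, once one observes that every discrete group $G$ is a discrete hypergroup: its convolution is $\delta_x\ast\delta_y=\delta_{xy}$, its involution is $x\mapsto x^{-1}$, and the Haar measure \eqref{hard} reduces to the counting measure $\mu$, since $\delta_x\ast\delta_{x^-}(\{e\})=\delta_e(\{e\})=1$ for every $x$. Thus $L^p(G,\mu)=\ell^p(G)$, and Proposition \ref{exh} (hence Corollary \ref{corw}) is available on $\ell^p(G)$.

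First I would fix the hypergroup data. Take $K:=G$ and let $(a_n)_{n\in\mathbb{N}_0}$ be the sequence in $G$ given by $a_n:=a^{\gamma_n}$. Since $a$ has infinite order and the $\gamma_n$ are pairwise distinct, the points $a^{\gamma_n}$ are pairwise distinct, so this sequence has distinct terms, as the theorem requires. For it, the function $h_{a,w}$ defined just before the theorem is $\sum_{n\in\mathbb{N}_0}\frac{1}{w(a^{\gamma_0})w(a^{\gamma_1})\cdots w(a^{\gamma_n})}\,\chi_{\{a^{\gamma_{n+1}}\}}$, and since $\mu$ is counting and the points $a^{\gamma_{n+1}}$ are distinct,
$$\|h_{a,w}\|_p^p=\sum_{n\in\mathbb{N}_0}\Big(\frac{1}{w(a^{\gamma_0})w(a^{\gamma_1})\cdots w(a^{\gamma_n})}\Big)^{p},$$
which is finite precisely under the stated hypothesis. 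Hence $h_{a,w}\in\ell^p(G)$, and the preceding theorem applies: the set of non-hypercyclic vectors of the sequence $(\Lambda_n)_n$ of Definition \ref{defl} attached to $(a_n)$ and $w$ is not $\sigma$-porous in $\ell^p(G)$.

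The remaining step is to identify this operator sequence with $(T_{a,w,p}^{\gamma_n})_n$, and I expect it to be the real obstacle. Following the computation recorded after Definition \ref{defl} — where, for a group, $a_n:=a^{-n}$ gives $\Lambda_n=T_{a,w,p}^{n}$ — I would unwind Definition \ref{defl} for the present sequence (using $\delta_{a_j}\ast\delta_x=\delta_{a^{\gamma_j}x}$, so $w(a_j\ast x)=w(a^{\gamma_j}x)$) and compare, operator by operator, with $T_{a,w,p}^{\gamma_n}f(x)=\varphi_{\gamma_n}(x)\,f(a^{-\gamma_n}x)$, $\varphi_{\gamma_n}=w\,L_aw\,\ldots\,L_{a^{\gamma_n-1}}w$. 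The subtlety is that the $n$-th operator $\Lambda_{n+1}$ carries only $n+1$ accumulated weight factors, whereas $T_{a,w,p}^{\gamma_{n+1}}$ carries $\gamma_{n+1}$ of them, so the identification is not automatic; one must either re-parametrise $(a_n)$ so that the accumulated weights telescope correctly, or — more robustly — verify directly that the explicit non-$\sigma$-porous set produced by the theorem, namely
$$E:=\Big\{f\in\ell^p(G):\ |f(a^{\gamma_{n+1}})|\geq \frac{1}{w(a^{\gamma_0})w(a^{\gamma_1})\cdots w(a^{\gamma_n})}\ \text{ for all }n\Big\}=\big\{f:|f|\geq h_{a,w}\big\},$$
which is non-$\sigma$-porous by Corollary \ref{corw}, is contained in $[HC((T_{a,w,p}^{\gamma_n})_n)]^{c}$. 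For that inclusion the natural tool is the pointwise bound $\|T_{a,w,p}^{\gamma_m}f\|_p\geq|\varphi_{\gamma_m}(x)|\,|f(a^{-\gamma_m}x)|$, evaluated at a point $x$ chosen so that $a^{-\gamma_m}x$ lands on a coordinate constrained by the definition of $E$; proving that this lower bound keeps $0$ out of the closure of $\{T_{a,w,p}^{\gamma_m}f:m\in\mathbb{N}\}$, uniformly over $E$, is where the work concentrates, after which the conclusion follows exactly as in the theorem.
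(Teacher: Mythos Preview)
The paper gives no separate proof: the corollary is presented as an immediate consequence of the hypergroup theorem via the observation that every discrete group is a discrete hypergroup with counting Haar measure. Your set-up reproduces exactly that intended derivation: with $K=G$, $a_n:=a^{\gamma_n}$, the terms are distinct and the hypothesis of the corollary is precisely $h_{a,w}\in\ell^p(G)$, so the theorem yields that the non-hypercyclic vectors of $(\Lambda_n)_n$ form a non-$\sigma$-porous set.

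You are, however, right to flag the identification step as the real obstacle, and your diagnosis is correct: for a general sequence $(\gamma_n)$ the operators $\Lambda_{n}$ of Definition~\ref{defl} carry $n$ weight factors, while $T_{a,w,p}^{\gamma_n}$ carries $\gamma_n$ consecutive factors $w(x)w(a^{-1}x)\cdots w(a^{-(\gamma_n-1)}x)$; the two sequences coincide only when $\gamma_n=n$ (with $a_n=a^{-n}$). So the corollary, read literally as a statement about the iterates $T_{a,w,p}^{\gamma_n}$, does \emph{not} drop out of the theorem. This is a gap in the paper's derivation, not in your reasoning.

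Your proposed repair---keep the explicit set $E=\{f:|f|\ge h_{a,w}\}$ and show directly that $E\subseteq[HC((T_{a,w,p}^{\gamma_n})_n)]^c$---does not go through either. Evaluating $\|T_{a,w,p}^{\gamma_m}f\|_p$ from below at any point $x$ produces a factor $\prod_{k=1}^{\gamma_m}w(a^k y)$ (with $y=a^{-\gamma_m}x$), a product over $\gamma_m$ \emph{consecutive} powers of $a$, whereas both the hypothesis of the corollary and the membership condition for $E$ control only the sparse products $w(a^{\gamma_0})\cdots w(a^{\gamma_n})$. Nothing prevents $w$ from being tiny at the intermediate powers $a^k$ with $k\notin\{\gamma_j\}$, so there is no uniform lower bound available and $0$ can lie in the closure of the orbit. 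In short, with the hypothesis as stated there is no evident way to reach the conclusion about genuine iterates.

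The most consistent reading of the paper is that in this corollary the symbol $T_{a,w,p}^{\gamma_n}$ is being used loosely for the $\Lambda$-type operator attached to $(a^{\gamma_n})$, exactly as the subsequent $\ell^p(\mathbb{Z})$ corollary makes explicit by \emph{defining} $(T_{n+1}a)_k:=w_{\gamma_0}\cdots w_{\gamma_n}a_{k+\gamma_{n+1}}$ rather than taking an iterate. Under that reading your argument is complete and matches the paper's; under the literal reading as iterates, the statement needs the stronger hypothesis $\bigl(1/\prod_{k=1}^{\gamma_n}w(a^k)\bigr)_n\in\ell^p$ (compare Theorem~3.4 in the paper), and then Corollary~\ref{corw} applied to $g=\sup_n\bigl(\prod_{k=1}^{\gamma_n}w(a^k)\bigr)^{-1}\chi_{\{e\}}$ suffices.
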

Now, we can write the next corollary which is a generalization of \cite[Theorem 1]{bay}.
\begin{corollary}
	Let $p\geq 1$, $(\gamma_n)_n\subseteq \mathbb{N}$ be strictly increasing  and  $(w_n)_{n\in\mathbb{Z}}$ be a bounded sequence in $(0,\infty)$ such that 
	$$\big(\frac{1}{w_{\gamma_0}w_{\gamma_1}w_{\gamma_2}\ldots w_{\gamma_n}}\big)_n\in\ell^p(\mathbb{Z}).$$
	Then, the set of all non-hypercyclic vectors of the sequence $(T_n)_n$ is not $\sigma$-porous, where 
	$$(T_{n+1}a)_{k}:=w_{\gamma_0}w_{\gamma_1}w_{\gamma_2}\ldots w_{\gamma_n}a_{k+\gamma_{n+1}}\qquad(k\in\mathbb{N}_0)$$
	for all $a:=(a_j)_j\in \ell^p(\mathbb{Z})$.
\end{corollary}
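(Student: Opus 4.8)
The strategy is to reduce this corollary to the preceding one by recognizing the two-sided weighted backward shift structure as a weighted translation on the discrete group $G=\mathbb{Z}$. First I would set $a:=1$ in $\mathbb{Z}$ (which has infinite order) and take $\mu$ to be the counting measure, so that $L^p(\mathbb{Z},\mu)=\ell^p(\mathbb{Z})$. The weight on $\mathbb{Z}$ is the bounded function $w:\mathbb{Z}\to(0,\infty)$, $w(n):=w_n$; boundedness of $(w_n)_n$ guarantees $w$ is a legitimate weight in the sense of Definition \ref{wth}. Then the weighted translation operator $T_{1,w,p}$ acts by $(T_{1,w,p}f)(k)=w(k)f(k+1)$ (up to the convention $L_af(x)=f(a^{-1}x)=f(x-1)$; one checks the indexing matches the corollary's formula for $T_{n+1}$ once the powers are taken), and hence $T_{1,w,p}^{\gamma_n}$ has the effect described by $(T_{n+1}a)_k=w_{\gamma_0}w_{\gamma_1}\cdots w_{\gamma_n}\,a_{k+\gamma_{n+1}}$.

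The second step is to verify that the hypothesis of the previous corollary is satisfied. That corollary requires $(\gamma_n)_{n\in\mathbb{N}_0}\subseteq\mathbb{N}$ with distinct terms and
$$\big(\tfrac{1}{w(a^{\gamma_0})w(a^{\gamma_1})\ldots w(a^{\gamma_n})}\big)_n\in\ell^p(G).$$
Since $a=1$, we have $a^{\gamma_j}=\gamma_j$, so $w(a^{\gamma_j})=w_{\gamma_j}$, and the displayed condition becomes exactly
$$\big(\tfrac{1}{w_{\gamma_0}w_{\gamma_1}\cdots w_{\gamma_n}}\big)_n\in\ell^p(\mathbb{Z}),$$
which is the standing assumption here. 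Strict monotonicity of $(\gamma_n)_n$ is stronger than distinctness, so that part is immediate as well.

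Finally, invoking the previous corollary gives that the set of all non-hypercyclic vectors of $(T_{1,w,p}^{\gamma_n})_n$ is not $\sigma$-porous in $\ell^p(\mathbb{Z})$, and by the identification of $(T_{1,w,p}^{\gamma_n})_n$ with $(T_n)_n$ the conclusion follows. The only point requiring genuine care — and the place I expect a reader to want details — is the bookkeeping that the operator $T_{n+1}$ written here with forward shift $a_{k+\gamma_{n+1}}$ really is the $\gamma_{n+1}$-st power $T_{1,w,p}^{\gamma_{n+1}}$ under the left-translation convention of Definition \ref{wth}; this is a routine but slightly fiddly check of how the weight factors $w(a^k)$ accumulate under composition, and of the sign convention in $L_af(x)=f(a^{-1}x)$ versus the forward shift appearing in the statement. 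Everything else is a direct substitution into a result already proved.
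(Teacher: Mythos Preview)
Your overall strategy---specialize the preceding corollary to $G=\mathbb{Z}$ with $a=1$ and counting measure, then match the $\ell^p$-hypothesis---is exactly how the paper intends the result to follow (it gives no proof and presents the statement as a direct consequence of what came before). The hypothesis verification you give is correct: with $a=1$ one has $w(a^{\gamma_j})=w_{\gamma_j}$, and strict monotonicity of $(\gamma_n)$ certainly gives distinct terms.

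The gap is in the step you yourself flag as ``routine but slightly fiddly''. It is not routine, and as written it fails. Iterating the single operator $T_{1,w,p}$ on $\ell^p(\mathbb{Z})$ produces
\[
(T_{1,w,p}^{m}f)(k)=\Big(\prod_{j=0}^{m-1}w(a^{-j}k)\Big)\,f(a^{-m}k),
\]
a product of weights that depends on the coordinate $k$ (consecutive weight indices near $k$). By contrast, the $T_{n+1}$ in the statement multiplies every coordinate by the \emph{same} scalar $w_{\gamma_0}w_{\gamma_1}\cdots w_{\gamma_n}$ and shifts by $\gamma_{n+1}$; this cannot be an iterate of a single weighted translation. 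So the identification $(T_n)_n=(T_{1,w,p}^{\gamma_n})_n$ that you defer to a ``check'' does not hold, and the previous corollary as you invoke it does not literally apply.

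The operators $(T_n)_n$ here are instead instances of the sequence $(\Lambda_n)_n$ from Definition~\ref{defl}, obtained by taking $K=\mathbb{Z}$ (as a discrete hypergroup/group) and the sequence $a_n:=\gamma_n$; the right antecedent is therefore the hypergroup theorem and its proof via Proposition~\ref{exh}, not powers of a fixed $T_{a,w,p}$. Once you route through $(\Lambda_n)_n$, the argument you outline (the set $\{f:|f|\ge h_{a,w}\}$ is non-$\sigma$-porous and is contained in the non-hypercyclic vectors because $\|\Lambda_{n+1}f\|_p\ge 1$) goes through verbatim. In short: right plan, wrong operator identification---fix the latter and the proof is complete.
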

Applying Theorem \ref{thm2} we can speak regarding some more general situation in the case of $p=\infty$. 
Let $\Omega$ be a locally compact Hausdorff space endowed with a nonnegative Radon measure $\mu$. Let $w:\Omega\rightarrow(0,\infty)$ be a bounded measurable function, and $\alpha:\Omega\rightarrow \Omega$ be a bi-measurable mapping such that $\|f\circ\alpha^{\pm 1}\|_\infty=\|f\|_\infty$ for all $f\in L^\infty(\Omega,\mu)$. Then, we define $T_{\alpha,w,\infty}:L^\infty(\Omega,\mu)\rightarrow L^\infty(\Omega,\mu)$ by 
$$T_{\alpha,w,\infty}(f):=w\,(f\circ\alpha)\qquad(f\in L^\infty(\Omega,\mu)).$$
If $\Omega$ be a locally compact group and $a\in\Omega$, setting $\alpha_a(x):=ax$ for all $x\in\Omega$, we denote $T_{a,w,\infty}:=T_{\alpha_a,w,\infty}$. Note that $\alpha^{-1}$ means the inverse function of $\alpha$, and for each $k\in\mathbb{N}$, $\alpha^{-k}:=(\alpha^{-1})^k$.
\begin{theorem}
	Let $T_{\alpha,w,\infty}$ be the weighted composition operator defined as above and let $\{\gamma_n\}_n\subseteq \mathbb{N}$ be a fixed unbounded sequence. Suppose that there exists a sequence $\{A_n\}_n$ of disjoint subsets of $\Omega$ with $\mu(A_n)>0$ for all $n$ such that 
	$$j_{\alpha,w}:=\sum_{n\in\mathbb{N}}\frac{1}{(w\circ\alpha^{-\gamma_n})\,(w\circ\alpha^{-\gamma_n+1})\ldots (w\circ\alpha^{-1})}\chi_{A_n}\in L^\infty(\Omega,\mu).$$
	Then, the set $\{f\in L^\infty(\Omega,\mu):\,\|T^{\gamma_n}_{\alpha,w,\infty}(f)\|_\infty\geq 1\,\,\text{for all } n\}$ is not $\sigma$-porous. In particular, the set of all non-hypercyclic vectors of the sequence $\{T^{\gamma_n}_{\alpha,w,\infty}\}_n$ is not $\sigma$-porous. 
\end{theorem}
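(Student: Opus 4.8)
The plan is to reduce the statement to Theorem \ref{thm2} by exhibiting an explicit non-$\sigma$-porous subset of the non-hypercyclic vectors, exactly in the spirit of the discrete-group and hypergroup applications above. First I would set $g:=j_{\alpha,w}$ and define
$$E:=\big\{f\in L^\infty(\Omega,\mu):\,|f|\geq |g|\text{ a.e.}\big\}=\Gamma_g,$$
which by Theorem \ref{thm2} is not $\sigma$-porous in $L^\infty(\Omega,\mu)$. Since $\sigma$-porosity is inherited by subsets, it suffices to show that $E$ is contained in the set $\{f:\|T^{\gamma_n}_{\alpha,w,\infty}(f)\|_\infty\geq 1\text{ for all }n\}$, which in turn is contained in the set of non-hypercyclic vectors of $(T^{\gamma_n}_{\alpha,w,\infty})_n$ (because a vector $f$ whose entire orbit stays at distance $\geq 1$ from $0$ cannot have dense orbit).

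The key computation is to unwind $T^{\gamma_n}_{\alpha,w,\infty}(f)$. Iterating the definition $T_{\alpha,w,\infty}(f)=w\,(f\circ\alpha)$ gives, for each $m\in\mathbb{N}$,
$$T^{m}_{\alpha,w,\infty}(f)=w\,(w\circ\alpha)\,(w\circ\alpha^2)\cdots(w\circ\alpha^{m-1})\,(f\circ\alpha^m).$$
Now I would use the hypothesis $\|h\circ\alpha^{\pm1}\|_\infty=\|h\|_\infty$, which forces $\alpha$ to be, up to a $\mu$-null set, measure-class-preserving in the sense needed so that composing with $\alpha^{-m}$ does not change the essential supremum over any set of positive measure. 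Applying the change of variables $x\mapsto\alpha^{-\gamma_n}(x)$ (i.e. restricting attention to where $\alpha^{\gamma_n}(x)$ ranges over $A_n$), one sees that for $f\in E$,
$$\|T^{\gamma_n}_{\alpha,w,\infty}(f)\|_\infty\;\geq\;\big\|\,(w)(w\circ\alpha)\cdots(w\circ\alpha^{\gamma_n-1})\,(f\circ\alpha^{\gamma_n})\,\big\|_{\infty}\;\geq\;\Big\|\,\frac{|f\circ\alpha^{\gamma_n}|}{(w\circ\alpha^{-\gamma_n})\cdots(w\circ\alpha^{-1})\circ\alpha^{\gamma_n}}\,\Big\|_{\infty},$$
and on the set where $\alpha^{\gamma_n}(x)\in A_n$ the quantity $|f\circ\alpha^{\gamma_n}|$ dominates $|g|\circ\alpha^{\gamma_n}\geq \chi_{A_n}\circ\alpha^{\gamma_n}$ times the reciprocal product, so the essential supremum is at least $1$. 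I would present this chain carefully, keeping the product written forwards versus backwards straight, since that is where sign/index errors creep in.

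The main obstacle I expect is the bookkeeping around $\alpha$ and its inverse: matching the product $(w\circ\alpha^{-\gamma_n})(w\circ\alpha^{-\gamma_n+1})\cdots(w\circ\alpha^{-1})$ appearing in $j_{\alpha,w}$ with the product $(w)(w\circ\alpha)\cdots(w\circ\alpha^{\gamma_n-1})$ appearing in the iterated operator requires composing the former with $\alpha^{\gamma_n}$ and verifying the indices line up, and then invoking $\|h\circ\alpha^{\pm1}\|_\infty=\|h\|_\infty$ to move the essential supremum through that composition. Once that identification is made, the inequality $\|T^{\gamma_n}_{\alpha,w,\infty}(f)\|_\infty\geq 1$ for all $n$ is immediate from $f\in E$ and $\mu(A_n)>0$, and the conclusion follows as above; in the locally compact group case with $\alpha_a(x)=ax$ this recovers the statement about $T_{a,w,\infty}$ quoted in the introduction.
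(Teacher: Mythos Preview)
Your approach is correct and essentially identical to the paper's: set $E=\Gamma_{j_{\alpha,w}}$, invoke Theorem~\ref{thm2}, and show $E$ is contained in the target set via the iterated-operator formula together with the $L^\infty$-isometry of composition with $\alpha^{\pm1}$. The paper streamlines your change-of-variables step by applying $\|h\circ\alpha^{-\gamma_n}\|_\infty=\|h\|_\infty$ to the entire expression at once, obtaining $\|T^{\gamma_n}_{\alpha,w,\infty}(f)\|_\infty=\big\|\prod_{k=1}^{\gamma_n}(w\circ\alpha^{-k})\,f\big\|_\infty$ directly and then multiplying by $\chi_{A_n}$, which sidesteps the preimage bookkeeping you were worried about.
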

\begin{proof}
	Let $E:=\{f\in L^\infty(\Omega,\mu):\,|f|\geq j_{\alpha,w}\}$. Then, $E$ is not $\sigma$-porous thanks to Theorem \ref{thm2}. For each $f\in E$ and $n\in \mathbb{N}$ we have 
	\begin{align*}
	\|T^{\gamma_n}_{\alpha,w,\infty}(f)\|_\infty&=\|\prod_{k=1}^{\gamma_n}(w\circ\alpha^{\gamma_n-k})\,(f\circ\alpha^{\gamma_n})\|_\infty\\
	&=\|\prod_{k=1}^{\gamma_n}(w\circ\alpha^{-k})\,f\|_\infty\\
	&\geq \|\prod_{k=1}^{\gamma_n}(w\circ\alpha^{-k})\,\chi_{A_n}\,f\|_\infty\\
	&\geq \|\prod_{k=1}^{\gamma_n}(w\circ\alpha^{-k})\,\chi_{A_n}\,j_{\alpha,w}\|_\infty\\
	&=1.
	\end{align*}
	This completes the proof.
\end{proof}
\begin{corollary}
	Let $G$ be a locally compact group and $\mu$ be a left Haar measure on $G$. Let $a\in G$ and $w:G\to(0,\infty)$ be a bounded measurable function. Then, if 
	$$\big(\frac{1}{w(a)w(a^2)\ldots w(a^{n})}\big)_n\in L^\infty(G,\mu),$$
	then the set of all non-hypercyclic vectors of the operator $T_{a,w,\infty}$ on $L^\infty(G,\mu)$ is not $\sigma$-porous.
\end{corollary}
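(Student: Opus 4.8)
The plan is to deduce this corollary directly from the preceding theorem by recognizing a locally compact group $G$ as a special instance of the weighted composition operator framework with $\Omega=G$, $\mu$ the left Haar measure, and $\alpha=\alpha_a$ the left translation $x\mapsto ax$. First I would check the standing hypotheses of the theorem: since $\mu$ is a left Haar measure and $\alpha_a$ is left translation by $a$, the operator $f\mapsto f\circ\alpha_a^{\pm1}$ preserves $L^\infty$-norms (indeed $\|f\circ\alpha_a^{\pm1}\|_\infty=\|f\|_\infty$ because left translation of the argument does not change the essential supremum — the Haar measure is quasi-invariant under left translations, and left-invariant up to null sets). So $T_{a,w,\infty}=T_{\alpha_a,w,\infty}$ is a well-defined bounded operator on $L^\infty(G,\mu)$.

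Next I would take $\gamma_n:=n$ for all $n\in\mathbb{N}$, which is an unbounded sequence, so that $T^{\gamma_n}_{\alpha_a,w,\infty}=T_{a,w,\infty}^n$ and the sequence $\{T^{\gamma_n}_{\alpha_a,w,\infty}\}_n$ is precisely the orbit sequence $(T_{a,w,\infty}^n)_n$; hence the hypercyclic vectors of the operator $T_{a,w,\infty}$ coincide with the hypercyclic vectors of this sequence. The key remaining point is to produce the sequence $\{A_n\}_n$ of disjoint positive-measure sets with the required summability. Here I would exploit the hypothesis $\big(\frac{1}{w(a)w(a^2)\cdots w(a^n)}\big)_n\in L^\infty(G,\mu)$: this says the scalar product $c_n:=\frac{1}{w(a)\cdots w(a^n)}$ is a bounded sequence (a constant function in $n$ on all of $G$), say $c_n\le M$. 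One natural choice is to pick any single set $A$ with $0<\mu(A)<\infty$ — such a set exists because $\mu$ is a nonzero Radon measure — but since the $A_n$ must be disjoint, I would instead decompose: if $G$ has a $\sigma$-finite piece carrying infinitely many disjoint positive-measure sets, set those as the $A_n$; otherwise (e.g. $G$ discrete with atoms) one can still extract disjoint singletons. In either case, because $\alpha_a^{-k}$ is left translation and $w$ is being composed, one computes $(w\circ\alpha_a^{-\gamma_n})(w\circ\alpha_a^{-\gamma_n+1})\cdots(w\circ\alpha_a^{-1})$ evaluated appropriately, and the point is that $j_{\alpha_a,w}=\sum_n \frac{1}{(w\circ\alpha_a^{-\gamma_n})\cdots(w\circ\alpha_a^{-1})}\chi_{A_n}$ lies in $L^\infty$ precisely when the coefficients are uniformly bounded, which follows from the hypothesis after translating: on translates of the $A_n$ the relevant product is a translate of $w(a)\cdots w(a^n)$, whose reciprocal is bounded by assumption.

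The main obstacle I anticipate is the bookkeeping of how the composition products $(w\circ\alpha_a^{-k})$ relate, under left translation, to the scalar products $w(a)w(a^2)\cdots w(a^n)$ appearing in the corollary's hypothesis, and ensuring the chosen $A_n$ genuinely have positive measure and are mutually disjoint in the possibly non-$\sigma$-finite setting of a general locally compact group. Once the family $\{A_n\}$ is fixed and $j_{\alpha_a,w}\in L^\infty(G,\mu)$ is verified, the theorem applies verbatim and gives that $\{f:\|T_{a,w,\infty}^{\gamma_n}f\|_\infty\ge 1\ \forall n\}$ — hence a fortiori the set of non-hypercyclic vectors of $T_{a,w,\infty}$, which contains it — is not $\sigma$-porous, completing the proof. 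If the disjointness issue proves delicate, a cleaner route is to invoke Theorem \ref{thm2} directly: set $j:=\big(\frac{1}{w(a)w(a^2)\cdots w(a^n)}\big)_n$ viewed as an element of $L^\infty(G,\mu)$ supported on a suitable set, let $E:=\{f\in L^\infty(G,\mu):|f|\ge |j|\text{ a.e.}\}$, which is non-$\sigma$-porous by Theorem \ref{thm2}, and then estimate $\|T_{a,w,\infty}^n f\|_\infty\ge\|\prod_{k=1}^n(w\circ\alpha_a^{-k})\cdot f\|_\infty\ge 1$ for $f\in E$ exactly as in the theorem's proof, so that $E$ lies in the complement of the hypercyclic vectors.
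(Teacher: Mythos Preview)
Your overall plan---specialize the preceding theorem to $\Omega=G$, $\alpha=\alpha_a$, $\gamma_n=n$---is precisely the paper's intended route (the corollary is stated without proof as an immediate consequence). The verification that $f\mapsto f\circ\alpha_a^{\pm1}$ preserves $\|\cdot\|_\infty$ and that $T_{a,w,\infty}=T_{\alpha_a,w,\infty}$ is fine. The gap is in your choice of the sets $A_n$.

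You suggest picking \emph{any} disjoint family of positive-measure sets and then assert that $j_{\alpha_a,w}\in L^\infty$ because ``the coefficients are uniformly bounded'' and ``on translates of the $A_n$ the relevant product is a translate of $w(a)\cdots w(a^n)$''. But the summands of
\[
j_{\alpha_a,w}=\sum_{n}\frac{1}{(w\circ\alpha_a^{-n})\cdots(w\circ\alpha_a^{-1})}\,\chi_{A_n}
\]
are \emph{functions}, not scalars: for $x\in A_n$ one has $j_{\alpha_a,w}(x)=\bigl(w(a^{-n}x)\cdots w(a^{-1}x)\bigr)^{-1}$, and for a generic positive-measure set $A_n$ this bears no relation whatsoever to the hypothesis on the scalar $w(a)w(a^2)\cdots w(a^n)$. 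The sets $A_n$ must be chosen so that the variable product collapses to the given scalar one. The natural choice is $A_n:=\{a^{n+1}\}$: then $a^{-k}x=a^{n+1-k}$ for $x=a^{n+1}$, whence $\prod_{k=1}^n w(a^{-k}x)=\prod_{j=1}^n w(a^j)$ and $\|j_{\alpha_a,w}\|_\infty=\sup_n\bigl(w(a)\cdots w(a^n)\bigr)^{-1}<\infty$ by hypothesis. Note that this choice forces the powers $a^n$ to be distinct and $\mu(\{a^{n+1}\})>0$, i.e.\ $a$ of infinite order and $G$ discrete---assumptions the paper leaves implicit but which match the $\ell^\infty$ corollary that immediately follows.

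Your fallback through Theorem~\ref{thm2} runs into the same obstruction: to bound $\|T_{a,w,\infty}^n f\|_\infty=\bigl\|\prod_{k=1}^n(w\circ\alpha_a^{-k})\,f\bigr\|_\infty$ from below by $1$ for every $f$ with $|f|\ge|g|$ a.e., you must again identify where in $G$ the function $x\mapsto\prod_{k=1}^n w(a^{-k}x)$ realizes the value $w(a)\cdots w(a^n)$, and build $g$ to live there. Simply declaring $g$ to be ``the sequence $(c_n)$ supported on a suitable set'' does not do this.
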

\begin{corollary}
	If $(w_n)_{n\in\mathbb{Z}}$ is a bounded sequence such that 
	$$\big(\frac{1}{w_{1}\ldots w_{n}}\big)_n\in\ell^\infty,$$
	then the set of all non-hypercyclic vectors of the sequence $(T_{\gamma_n,w})_n$ is not $\sigma$-porous in $\ell^\infty$.
\end{corollary}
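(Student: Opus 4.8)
The plan is to obtain this as a direct specialization of the preceding corollary (equivalently, of the theorem on weighted composition operators on $L^\infty$), taking $G=\mathbb{Z}$ with its additive structure, $\mu$ the counting measure on $\mathbb{Z}$, and $a=1$. The first point to record is the isometric identification $L^\infty(\mathbb{Z},\mu)=\ell^\infty$: since every non-empty subset of $\mathbb{Z}$ has positive counting measure, "$\mu$-a.e." coincides with "everywhere", so the two spaces and their norms agree. Under this identification the translation $\alpha_a(k)=k+1$ satisfies $\|f\circ\alpha_a^{\pm 1}\|_\infty=\|f\|_\infty$, the given bounded sequence $w=(w_k)_{k\in\mathbb{Z}}$ is a bounded weight $\mathbb{Z}\to(0,\infty)$, and $T_{a,w,\infty}$ is exactly the weighted shift $(T_{a,w,\infty}f)_k=w_k f_{k+1}$, whose iterates telescope as $(T_{a,w,\infty}^{\gamma_n}f)_k=w_k w_{k+1}\cdots w_{k+\gamma_n-1}\,f_{k+\gamma_n}$.

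Next I would translate the hypothesis. Evaluating the quantity appearing in the preceding corollary at the group element $a^n=n$, the product $w(a)w(a^2)\cdots w(a^n)$ is identified with $w_1 w_2\cdots w_n$ (up to a harmless index shift, which is absorbed by choosing the disjoint sets $A_n$ appropriately in the underlying theorem). Hence the assumption $\big(\tfrac{1}{w_1\cdots w_n}\big)_n\in\ell^\infty$ is precisely the assumption $\big(\tfrac{1}{w(a)\cdots w(a^n)}\big)_n\in L^\infty(\mathbb{Z},\mu)$ required there. Applying that corollary — or, for a general unbounded subsequence $(\gamma_n)_n$, applying the theorem with $\{A_n\}_n$ a pairwise disjoint family of singletons along which $j_{\alpha,w}=\sum_n\frac{1}{w_1\cdots w_{\gamma_n}}\chi_{A_n}$ is dominated coordinatewise by the given $\ell^\infty$ sequence, hence lies in $\ell^\infty$ — immediately yields that $\{f\in\ell^\infty:\,\|T_{\gamma_n,w}f\|_\infty\ge 1\ \text{for all }n\}$ is non-$\sigma$-porous. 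This set contains no hypercyclic vectors, since if the orbit of $f$ were dense then some iterate would lie within distance $1$ of $0$; therefore the set of non-hypercyclic vectors, containing it, is non-$\sigma$-porous as well.

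The only thing requiring care — and it is genuinely bookkeeping rather than a real obstacle — is matching the indexing conventions: ensuring that the product $w(a^1)\cdots w(a^n)$ coming from the iterate of $T_{a,w,\infty}$ lines up with $w_1\cdots w_n$ in the hypothesis, and choosing the sets $A_n$ (or the single evaluation point) in the ambient theorem so that membership $j_{\alpha,w}\in\ell^\infty$ is literally equivalent to the stated summability. Once the dictionary $w(a^k)\leftrightarrow w_k$ and $L^\infty(\mathbb{Z},\text{counting})\leftrightarrow\ell^\infty$ is in place, no further argument is needed; boundedness of $(w_k)$, assumed in the statement, guarantees that $T_{a,w,\infty}$ is a bounded operator, so the earlier results apply verbatim.
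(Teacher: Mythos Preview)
Your proposal is correct and follows exactly the route the paper intends: the corollary is stated without proof precisely because it is the specialization $G=\mathbb{Z}$, $\mu=$ counting measure, $a=1$ of the preceding corollary on locally compact groups (and, for a general unbounded $(\gamma_n)$, of the theorem on $T_{\alpha,w,\infty}$ with $A_n$ chosen as disjoint singletons). Your identification $L^\infty(\mathbb{Z})=\ell^\infty$, the formula $(T_{a,w,\infty}f)_k=w_k f_{k+1}$, and the translation $w(a^k)=w_k$ are all correct, and your remark that the only content is index bookkeeping is accurate.
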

\begin{theorem}\label{317}
	Let $T_{\alpha,w,\infty}$ be the weighted composition operator on $L^\infty(\Omega,\mu)$ and let $F\subseteq \Omega$ be a Borel set  with $0<\mu(F)<\infty$. Let there exists a constant $N>0$ such that for all $n\geq N$, \begin{equation}\label{cond2}
	\alpha^n(F)\cap F=\varnothing,
		\end{equation}
		 and
	$$\beta:=\inf\{\prod_{k=1}^n(w\circ\alpha^{-k})(t):\,n\geq N,\,t\in F\}\neq 0.$$
	Then, the set $$\{f\in L^\infty(\Omega,\mu):\, \|T^n_{\alpha,w,\infty}f-\chi_F\|_\infty\geq 1\,\text{for all }n\geq N\}$$
	is not $\sigma$-porous in $L^\infty(\Omega,\mu)$.
\end{theorem}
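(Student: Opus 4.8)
The plan is to mimic the strategy of Theorem \ref{thm2}: exhibit a non-$\sigma$-porous subset $\Gamma$ of $L^\infty(\Omega,\mu)$ contained in the target set, and then invoke the already-established result. The natural candidate is
$$\Gamma:=\Big\{f\in L^\infty(\Omega,\mu):\,|f|\geq \tfrac{1}{\beta}\chi_F\Big\},$$
which is not $\sigma$-porous by Theorem \ref{thm2}, since $\tfrac{1}{\beta}\chi_F\in L^\infty(\Omega,\mu)$ (here we use $0<\mu(F)<\infty$, though really only $\mu(F)<\infty$ is needed for membership). So the whole content of the proof reduces to verifying the inclusion $\Gamma\subseteq\{f:\|T^n_{\alpha,w,\infty}f-\chi_F\|_\infty\geq 1\text{ for all }n\geq N\}$.

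To check that inclusion, fix $f\in\Gamma$ and $n\geq N$. First unwind the $n$-th iterate: by definition $T^n_{\alpha,w,\infty}f=\prod_{k=0}^{n-1}(w\circ\alpha^k)\cdot(f\circ\alpha^n)$, and composing with $\alpha^{-n}$ (which preserves the $\infty$-norm by hypothesis) gives
$$\|T^n_{\alpha,w,\infty}f-\chi_F\|_\infty=\Big\|\prod_{k=1}^{n}(w\circ\alpha^{-k})\cdot f-\chi_F\circ\alpha^{-n}\Big\|_\infty=\Big\|\prod_{k=1}^{n}(w\circ\alpha^{-k})\cdot f-\chi_{\alpha^n(F)}\Big\|_\infty.$$
Now restrict attention to the set $F$ itself. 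On $F$, the condition \eqref{cond2} (i.e.\ $\alpha^n(F)\cap F=\varnothing$ for $n\geq N$) forces $\chi_{\alpha^n(F)}=0$ a.e.\ on $F$, so on $F$ the expression inside the norm equals $\prod_{k=1}^{n}(w\circ\alpha^{-k})\cdot f$. Since $\mu(F)>0$ and for $t\in F$ we have $\prod_{k=1}^{n}(w\circ\alpha^{-k})(t)\geq\beta$ while $|f(t)|\geq \tfrac{1}{\beta}$, we get $\big|\prod_{k=1}^{n}(w\circ\alpha^{-k})(t)f(t)\big|\geq 1$ for a.e.\ $t\in F$, whence the essential supremum over $F$ — and a fortiori over $\Omega$ — is at least $1$. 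This yields $\|T^n_{\alpha,w,\infty}f-\chi_F\|_\infty\geq 1$ for all $n\geq N$, establishing the inclusion. Finally, a set containing a non-$\sigma$-porous set is itself non-$\sigma$-porous (a $\sigma$-porous superset would have $\sigma$-porous subsets, but $\sigma$-porosity is not hereditary — rather, one notes directly that if the larger set were a countable union of $\lambda$-porous pieces, intersecting with $\Gamma$ would exhibit $\Gamma$ as $\sigma$-porous, contradiction; alternatively this monotonicity is immediate from the definition of $\sigma$-porous). So the target set is not $\sigma$-porous.

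The only mild subtlety — and the place where some care is warranted rather than a genuine obstacle — is the bookkeeping in passing from $T^n_{\alpha,w,\infty}f$ to the product $\prod_{k=1}^n (w\circ\alpha^{-k})\cdot f$ via the index shift by $\alpha^{-n}$; one must make sure the hypothesis $\|g\circ\alpha^{\pm1}\|_\infty=\|g\|_\infty$ is applied to the right function and that the weight product reindexes correctly (matching the exponent pattern already used in the statement's definition of $\beta$). Everything else is a direct substitution, and unlike Theorem \ref{thm1} there is no need to re-run the family-of-closed-sets machinery of Lemma \ref{lem1}, since Theorem \ref{thm2} already packages it.
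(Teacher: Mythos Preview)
Your proposal is correct and follows essentially the same route as the paper: define $\Gamma=\{f\in L^\infty(\Omega,\mu):|f|\geq\beta^{-1}\chi_F\}$, invoke Theorem~\ref{thm2} to get that $\Gamma$ is not $\sigma$-porous, and then verify $\Gamma$ is contained in the target set by composing $T^n_{\alpha,w,\infty}f-\chi_F$ with $\alpha^{-n}$, restricting to $F$, and using the disjointness hypothesis together with the lower bound $\beta$. Your bookkeeping on the reindexing (in particular $\chi_F\circ\alpha^{-n}=\chi_{\alpha^n(F)}$) is in fact cleaner than the paper's, and your remark that supersets of non-$\sigma$-porous sets are non-$\sigma$-porous is the routine observation the paper leaves implicit.
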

\begin{proof}
	Let $\Gamma:=\{f\in L^\infty(\Omega,\mu):\,|f|\geq \frac{1}{\beta}\chi_F\}$. Then by Theorem \ref{thm2}, $\Gamma$ is not $\sigma$-porous in $L^\infty(\Omega,\mu)$. Also, for each $f\in \Gamma$ we have 
	\begin{align*}
	\|T^n_{\alpha,w,\infty}f-\chi_F\|_\infty&=\|\prod_{k=1}^n(w\circ\alpha^{n-k})\,(f\circ\alpha^n)-\chi_F\|_\infty\\
	&=\|\prod_{k=1}^n(w\circ\alpha^{-k})\,f-\chi_F\circ\alpha^{n}\|_\infty\\
	&=\|\prod_{k=1}^n(w\circ\alpha^{-k})\,f-\chi_{\alpha^n(F)}\|_\infty\\
	&\geq \|\prod_{k=1}^n(w\circ\alpha^{-k})\,f\chi_F-\chi_{\alpha^n(F)}\chi_F\|_\infty\\
	&=\|\prod_{k=1}^n(w\circ\alpha^{-k})\,f\chi_F\|_\infty\\
	&\geq \beta\|f\chi_F\|_\infty\geq \beta\,\frac{1}{\beta}\,\|\chi_F\|_\infty=1.
	\end{align*}
	This completes the proof.
\end{proof}
\begin{example}
	Let $\Omega:=\mathbb{R}$ and $\mu$ be the Lebesgue measure. Put $\alpha(t):=t-1$ for all $t\in\mathbb{R}$ and $F:=[0,1]$. If $w\in C_b(\mathbb{R})$ such that $|w(t)|\geq 1$ for all $t\geq k>0$ and $\inf\{|w(t)|:\,t\in [0,1]\}>0$, then the required conditions in the previous theorem hold with respect to $F$.
\end{example}
With some similar proof, one can prove the next fact without the condition \eqref{cond2}.
\begin{theorem}
	Let $T_{\alpha,w,\infty}$ be the weighted composition operator on $L^\infty(\Omega,\mu)$ and let $F\subseteq \Omega$ be a Borel set  with $0<\mu(F)<\infty$  such that  
	$$\inf\{\prod_{k=1}^n(w\circ\alpha^{-k})(t):\,n\geq N,\,t\in F\}\neq 0.$$
	Then, the set $$\{f\in L^\infty(\Omega,\mu):\, \|T^n_{\alpha,w,\infty}f\|_\infty\geq 1\,\text{for all }n\geq N\}$$
	is not $\sigma$-porous in $L^\infty(\Omega,\mu)$. In particular, the set of all non-hypercyclic vectors of the operator $T_{\alpha,w,\infty}$ is not $\sigma$-porous.
\end{theorem}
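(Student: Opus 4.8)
The plan is to mimic the proof of Theorem \ref{317}, simply deleting the term coming from $\chi_{\alpha^n(F)}$, which was the only place where condition \eqref{cond2} was used. First I would put
$$\beta:=\inf\Big\{\prod_{k=1}^n(w\circ\alpha^{-k})(t):\,n\geq N,\,t\in F\Big\};$$
since every factor $w\circ\alpha^{-k}$ is strictly positive, the hypothesis ``$\beta\neq 0$'' says exactly that $\beta>0$. I would then introduce
$$\Gamma:=\big\{f\in L^\infty(\Omega,\mu):\,|f|\geq \tfrac1\beta\chi_F\big\}$$
and invoke Theorem \ref{thm2} with $g=\tfrac1\beta\chi_F\in L^\infty(\Omega,\mu)$ to conclude that $\Gamma$ is not $\sigma$-porous.

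The core step is to show that $\Gamma$ is contained in the target set $S:=\{f\in L^\infty(\Omega,\mu):\,\|T^n_{\alpha,w,\infty}f\|_\infty\geq 1\text{ for all }n\geq N\}$. For $f\in\Gamma$ and $n\geq N$, expanding the iterate as $T^n_{\alpha,w,\infty}f=\prod_{k=1}^n(w\circ\alpha^{n-k})\,(f\circ\alpha^n)$ and using that composition with $\alpha^{\pm n}$ preserves the $L^\infty$-norm gives $\|T^n_{\alpha,w,\infty}f\|_\infty=\big\|\prod_{k=1}^n(w\circ\alpha^{-k})\,f\big\|_\infty$. Restricting to $F$ and applying the pointwise bounds $\prod_{k=1}^n(w\circ\alpha^{-k})(t)\geq\beta$ for $t\in F$ (valid since $n\geq N$) and $|f|\geq\tfrac1\beta$ a.e.\ on $F$, together with $\mu(F)>0$, yields
$$\|T^n_{\alpha,w,\infty}f\|_\infty\geq\big\|\prod_{k=1}^n(w\circ\alpha^{-k})\,f\chi_F\big\|_\infty\geq\beta\cdot\tfrac1\beta=1,$$
so $f\in S$. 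Since $\sigma$-porosity passes to subsets (if $E'\subseteq E$ and $E$ is $\sigma$-$\lambda$-porous then $E'$ is $\sigma$-$\lambda$-porous as well, because a $\lambda$-porosity witness for $E$ at a point of $E'$ works for $E'$), a superset of a non-$\sigma$-porous set is itself non-$\sigma$-porous; hence $S\supseteq\Gamma$ is not $\sigma$-porous.

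For the last assertion, I would argue that $S$ is contained in the set of non-hypercyclic vectors: if $f\in S$ were a hypercyclic vector of $T_{\alpha,w,\infty}$, then the orbit $\{T^n_{\alpha,w,\infty}f:\,n\geq 0\}$ would be dense, so some subsequence $T^{n_j}_{\alpha,w,\infty}f\to 0$; after discarding the finitely many indices $n_j<N$ we may assume $n_j\geq N$ for all $j$, contradicting $\|T^{n_j}_{\alpha,w,\infty}f\|_\infty\geq 1$. Thus the set of non-hypercyclic vectors contains $S$, hence contains $\Gamma$, and is therefore not $\sigma$-porous. I expect no genuine obstacle here: the argument is a routine simplification of Theorem \ref{317}, and the only point needing a little care is the restriction-to-$F$ step, which uses both $\beta>0$ (for the uniform lower bound of the products on $F$) and $\mu(F)>0$ (so that the essential supremum over $F$ is really $\geq 1$), plus the correct bookkeeping of the index shift in the iterates of $T_{\alpha,w,\infty}$.
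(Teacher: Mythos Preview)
Your proposal is correct and follows exactly the approach the paper intends: the paper does not spell out a proof but merely says ``with some similar proof'' to Theorem~\ref{317}, and your argument is precisely that similar proof---define $\Gamma=\{f:|f|\ge\beta^{-1}\chi_F\}$, invoke Theorem~\ref{thm2}, and run the chain of inequalities from Theorem~\ref{317} with the $\chi_{\alpha^n(F)}$ term removed. The additional remarks you make (that $\beta\neq 0$ means $\beta>0$, that $\mu(F)>0$ is needed for the essential-supremum step, that non-$\sigma$-porosity passes to supersets, and the final hypercyclicity argument) are all correct and fill in details the paper leaves implicit.
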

In sequel, we find some application for Theorem \ref{thm3} regarding hypercyclicity of shift operators on $L^p(\mathbb{R},\tau)$. 
\begin{theorem}
	Consider the weighted translation operator $T_{\alpha,w}$ on $L^p(\mathbb{R},\tau)$ given by $T_{\alpha,w}f:=w\cdot(f\circ\alpha)$,  where $0<w,w^{-1}\in C_b(\mathbb{R})$ and $\alpha(t)=t+1$. For each $n\in\mathbb{N}$ put $A_n:=[n,n+1]=\alpha^n([0,1])$. Set 
	$$y_{\alpha,w}:=\sum_{n\in\mathbb{N}}\frac{1}{\inf_{t\in A_n}\prod_{k=1}^n(w\circ\alpha^{-k})(t)}\chi_{A_n}$$
	and assume that $y_{\alpha,w}\in L^p(\mathbb{R},\tau)$ (in particular $\inf_{t\in A_n}\prod_{k=1}^n(w\circ\alpha^{-k})(t)>0$ for all $n\in\mathbb{N}$). Then, the set 
	$$\{f\in L^p(\mathbb{R},\tau):\,\|T_{\alpha,w}^n(f)\|_p\geq 1\,\text{for all }n\in\mathbb{N}\}$$
	is not $\sigma$-porous.
\end{theorem}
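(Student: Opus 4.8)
The plan is to reduce this to Theorem \ref{thm3} exactly as the preceding $L^\infty$-results reduce to Theorem \ref{thm2}. Set
$$\Theta := \Big\{ f \in L^p(\mathbb{R},\tau) : \|f \chi_{[m,m+1]}\|_p \ge \|y_{\alpha,w}\,\chi_{[m,m+1]}\|_p \text{ for all } m \in \mathbb{Z} \Big\},$$
i.e. $\Theta = \Theta_{y_{\alpha,w}}$ in the notation of Theorem \ref{thm3}. Since $y_{\alpha,w} \in L^p(\mathbb{R},\tau)$ by hypothesis, Theorem \ref{thm3} gives immediately that $\Theta$ is not $\sigma$-porous. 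So it suffices to show $\Theta$ is contained in the target set $\{f : \|T_{\alpha,w}^n f\|_p \ge 1 \text{ for all } n\}$.

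First I would compute $\|T_{\alpha,w}^n f\|_p$ in terms of the iterated weight. Unwinding the definition, $T_{\alpha,w}^n f = \big(\prod_{k=1}^{n}(w\circ\alpha^{n-k})\big)\,(f\circ\alpha^n)$, and since $\alpha$ (being translation by $1$) is measure-preserving, the substitution $t \mapsto \alpha^{-n}(t) = t-n$ yields
$$\|T_{\alpha,w}^n f\|_p^p = \int_{\mathbb{R}} \Big|\prod_{k=1}^{n}(w\circ\alpha^{-k})(t)\Big|^p\,|f(t)|^p\,d\tau(t).$$
Now I would restrict the integral to $A_n = [n,n+1]$ and bound the weight below by its infimum over $A_n$:
$$\|T_{\alpha,w}^n f\|_p^p \ge \int_{A_n} \Big(\inf_{t\in A_n}\prod_{k=1}^{n}(w\circ\alpha^{-k})(t)\Big)^p |f(t)|^p\,d\tau(t) = \Big(\inf_{t\in A_n}\prod_{k=1}^{n}(w\circ\alpha^{-k})(t)\Big)^p \|f\chi_{A_n}\|_p^p.$$

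To finish, observe that $A_n = [n,n+1]$ is exactly one of the intervals $[m,m+1]$ appearing in the definition of $\Theta$, so for $f \in \Theta$ we have $\|f\chi_{A_n}\|_p \ge \|y_{\alpha,w}\chi_{A_n}\|_p$. By the definition of $y_{\alpha,w}$ and the disjointness of the $A_n$, the restriction of $y_{\alpha,w}$ to $A_n$ equals $\big(\inf_{t\in A_n}\prod_{k=1}^{n}(w\circ\alpha^{-k})(t)\big)^{-1}\chi_{A_n}$, whose $L^p$-norm is $\big(\inf_{t\in A_n}\prod_{k=1}^{n}(w\circ\alpha^{-k})(t)\big)^{-1}\,\tau(A_n)^{1/p} = \big(\inf_{t\in A_n}\prod_{k=1}^{n}(w\circ\alpha^{-k})(t)\big)^{-1}$ since $\tau(A_n)=1$. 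Combining the last two displays gives $\|T_{\alpha,w}^n f\|_p \ge 1$ for every $n \in \mathbb{N}$, hence $\Theta$ is contained in the claimed set and is not $\sigma$-porous. Since $0$ is therefore never in the closure of $\{T_{\alpha,w}^n f : n\}$ for $f \in \Theta$, the set of non-hypercyclic vectors contains $\Theta$ and is also non-$\sigma$-porous.

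The steps here are all routine; the only point requiring a little care is the change-of-variables identity for $\|T_{\alpha,w}^n f\|_p$, where one must track the composition of weights correctly (the indices $w\circ\alpha^{n-k}$ for $k=1,\dots,n$ become $w\circ\alpha^{-k}$ after the substitution), and the bookkeeping that $C_b$-boundedness of $w$ and $w^{-1}$ makes $T_{\alpha,w}$ a well-defined bounded operator on $L^p(\mathbb{R},\tau)$ so that all these manipulations are legitimate. I do not anticipate a genuine obstacle: this is a clean corollary of Theorem \ref{thm3} in the same spirit as the earlier applications of Theorems \ref{thm1} and \ref{thm2}.
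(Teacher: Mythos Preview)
Your proof is correct and follows essentially the same approach as the paper: define $\Theta=\Theta_{y_{\alpha,w}}$, invoke Theorem~\ref{thm3}, and then verify $\Theta\subseteq\{f:\|T_{\alpha,w}^n f\|_p\ge 1\text{ for all }n\}$ via the same change-of-variables computation and restriction to $A_n$. The paper additionally remarks that $y_{\alpha,w}\chi_{[m,m+1]}=0$ for $m\le 0$ to justify that the $\Theta$-condition for all $m\in\mathbb{Z}$ reduces to the condition on the $A_n$ with $n\in\mathbb{N}$, but this is implicit in your argument as well.
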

\begin{proof}
	By Theorem \ref{thm3}, the set 
	$$E:=\{f\in L^p(\mathbb{R},\tau):\,\|f\chi_{A_n}\|_p\geq \|y_{\alpha,w}\chi_{A_n}\|_p\,\text{ for all }n\in\mathbb{N}\}$$
	is not $\sigma$-porous, because it equals to 
	$$\{f\in L^p(\mathbb{R},\tau):\,\|f\chi_{[m,m+1]}\|_p\geq \|y_{\alpha,w}\chi_{[m,m+1]}\|_p\,\text{for all }m\in\mathbb{Z}\},$$
	as $y_{\alpha,w}\chi_{[m,m+1]}=0$ for all $m\in\mathbb Z$ with $m\leq 0$. Now, note that for each $f\in E$ and $n\in\mathbb{N}$, 
	\begin{align*}
	\|T_{\alpha,w}^n(f)\|_p^p&=\int_{\mathbb{R}}\left[\prod_{k=1}^n(w\circ\alpha^{n-k})(t)\right]^p\,|(f\circ\alpha^n)(t)|^p\,d\tau\\
	&=\int_{\mathbb{R}}\left[\prod_{k=1}^n(w\circ\alpha^{-k})(t)\right]^p\,|f(t)|^p\,d\tau\\
	&\geq \int_{A_n}\left[\prod_{k=1}^n(w\circ\alpha^{-k})(t)\right]^p\,|f(t)|^p\,d\tau\\
	&\geq \inf_{t\in A_n}\left[\prod_{k=1}^n(w\circ\alpha^{-k})(t)\right]^p\|y_{\alpha,w}\chi_{A_n}\|_p^p\\
	&=\inf_{t\in A_n}\left[\prod_{k=1}^n(w\circ\alpha^{-k})(t)\right]^p\,\frac{1}{\inf_{t\in A_n}\left[\prod_{k=1}^n(w\circ\alpha^{-k})(t)\right]^p}\,\tau(A_n)=1.
	\end{align*}
\end{proof}
Assume now that there exists some $l\in\mathbb{Z}$ such that 
$$\beta:=\inf\{\prod_{k=1}^n(w\circ\alpha^{-k})(t):\,t\in[l,l+1],\,n\in\mathbb{N}\}>0.$$
Put
$$F:=\{f\in L^p(\mathbb{R},\tau):\,\|f\chi_{[m,m+1]}\|_p\geq \|\frac{1}{\beta}\chi_{[l,l+1]}\chi_{[m,m+1]}\|_p\,\text{ for all }m\in\mathbb{Z}\}.$$
So by Theorem \ref{thm3}, $F$ is not $\sigma$-porous. For every $f\in F$ we have 
\begin{align*}
\|T_{\alpha,w}^n(f)\|_p^p&=\int_{\mathbb{R}}\left[\prod_{k=1}^n(w\circ\alpha^{n-k})(t)\right]^p\,|(f\circ\alpha^n)(t)|^p\,d\tau\\
&=\int_{\mathbb{R}}\left[\prod_{k=1}^n(w\circ\alpha^{-k})(t)\right]^p\,|f(t)|^p\,d\tau\\
&\geq \int_{[l,l+1]}\left[\prod_{k=1}^n(w\circ\alpha^{-k})(t)\right]^p\,|f(t)|^p\,d\tau\\
&\geq 1.
\end{align*}
Hence, the set 
$$\{f\in L^p(\mathbb{R},\tau):\,\|T_{\alpha,w}^n(f)\|_p\geq 1\,\text{for all }n\in\mathbb{N}\}$$
is not $\sigma$-porous.

Next, suppose that $\alpha$ is an aperiodic function on $\mathbb{R}$ (this means that for each compact set $C\subset \mathbb{R}$, there exists a constant $N>0$ such that $\alpha^n(C)\cap C=\varnothing$ for all $n\geq N$) and $\beta>0$, where $\beta$ is as above. Then, the set 
$$\{f\in L^p(\mathbb{R},\tau):\, \|T_{\alpha,w}^n(f)-\chi_{[l,l+1]}\|_p\geq 1\,\text{ for all }n\geq N\}$$
is not $\sigma$-porous. Indeed, for all $f\in F$, and $n\geq N$ we have 
$$\|T_{\alpha,w}^n(f)-\chi_{[l,l+1]}\|_p\geq \|\prod_{k=1}^n\,(w\circ\alpha^{-k})\,f\chi_{[l,l+1]}\|_p$$
by the similar calculations as in the proof of Theorem \ref{317}. However, 
$$\|\prod_{k=1}^n\,(w\circ\alpha^{-k})\,f\chi_{[l,l+1]}\|_p\geq \beta\,\|f\chi_{[l,l+1]}\|_p\geq 1.$$
\bibliographystyle{amsplain}

\end{document}